\documentclass[oneside,12pt]{amsart} \topmargin      -15mm \textwidth      160 true mm
\usepackage{amsmath, amssymb, amsfonts, amstext, amsthm}
\usepackage[mathscr]{euscript}
\usepackage{enumerate, mathtools}
\theoremstyle{plain}
\newtheorem{theorem}{Theorem}[section]
\newtheorem{proposition}[theorem]{Proposition}
\newtheorem{lemma}[theorem]{Lemma}
\newtheorem{corollary}[theorem]{Corollary}
\theoremstyle{remark}
\newtheorem{definition}[theorem]{Definition}
\newtheorem{eg}[theorem]{Example}
\newtheorem{remark}[theorem]{Remark}

\newtheorem*{acknow}{Acknowledgement}
\numberwithin{equation}{section}

\numberwithin{mytheorem}{subsection}
\textheight     240 true mm \oddsidemargin   -0.4cm

\def \no {\noindent}
\title{Results on the topology of generalized real Bott manifolds}
\author{Raisa DSouza}
\author{V. Uma}
\address{Department of Mathematics, IIT Madras, Chennai, India}
\email{raisadsouza1989@gmail.com; vuma@iitm.ac.in}
\date{}

\begin{document}

\begin{abstract}
 Generalized Bott manifolds (over $\mathbb C$ and $\mathbb R$) have been defined by Choi, Masuda and Suh in \cite{qtmoverproduct}. In this article we extend the results of \cite{umaraisa} on the topology of real Bott manifolds to generalized real Bott manifolds. We give a presentation of the fundamental group, prove that it is solvable and give a characterization for it to be abelian. We further prove that these manifolds are aspherical only in the case of real Bott manifolds and compute the higher homotopy groups. Furthermore, using the presentation of the cohomology ring with $\mathbb Z_2$-coefficients, we derive a combinatorial characterization for orientablity and spin structure.
\end{abstract}

\footnotetext{{\bf 2010 Subject Classification:} Primary: 55R99, Secondary: 57S25\\ {\bf Keywords:} Generalized real Bott manifolds, fundamental group, orientability, spin structure}

\maketitle

\section{Introduction}
A  generalized Bott tower has been defined by Choi, Masuda and Suh in \cite{masudachoigeneralized, qtmoverproduct} as a sequence of projective bundles 
 \begin{equation}\label{bt}
 B_k\xrightarrow{p_k}B_{k-1}\xrightarrow{p_{k-1}}\cdots\xrightarrow{p_2}B_1\xrightarrow{p_1}B_0=\{\,\text{a point}\,\}
 \end{equation}
                                                                                                                                                                                                                                  where $B_i$ for $i=1,\cdots,k$ is the projectivization of the Whitney sum of $n_i+1$ $\mathbb F$-line bundles over $B_{i-1}$, ($\mathbb F=\mathbb C$ or $\mathbb R$). This generalizes the construction of Bott towers by Grossberg and Karshon \cite{grossbergkarshon} in the case when $\mathbb F=\mathbb C$ and $n_i=1$ for every $i$. The $i$th stage $B_i$ of the tower can be alternately realized as a quasitoric manifold (when $\mathbb F=\mathbb C$) or a small cover (when $\mathbb F=\mathbb R$) over $\prod_{i=1}^k\Delta^{n_i}$ where $\Delta^{n_i}$ is the $n_i$-simplex. We call each $B_i$ a generalized Bott manifold (over $\mathbb F$). In the case when $n_i=1$ for every $i$ we call each $B_i$ a Bott manifold. \par                                                                                                                                                                                                                              
                                                                                                                                                                                                                                  In \cite{masudachoigeneralized}, Choi, Masuda and Suh study these manifolds from the viewpoint of the \emph{cohomological rigidity problem for toric manifolds}. Also in \cite[Theorem 6.4]{qtmoverproduct}, they characterize generalized Bott manifolds among the quasitoric manifolds over a product of simplices, as those which admit an invariant almost complex structure under the action of $(S^1)^k$. Moreover, in \cite[Section 7]{qtmoverproduct} they describe the cohomology ring of any quasitoric manifold over a product of simplices and in \cite[Section 8]{qtmoverproduct} they give a sufficient condition in terms of the cohomology ring for quasitoric manifolds over a product of simplices to be diffeomorphic to a generalized Bott manifold. \par
 
In this article our main objects of study are $B_k$ (when $\mathbb F$ = $\mathbb R$) which we call the generalized \emph{real} Bott manifolds. We begin Section \ref{2} with the definition of a \emph{small cover} \cite{davisjanus} over a product of simplices paralleling that of a quasitoric manifold as in \cite{qtmoverproduct}. It follows from \cite[Section 4.1 and 4.2]{kurokiprojbundles} that every generalized real Bott manifold is a small cover over a product of simplices. In Proposition \ref{scisgbt} we prove the converse, that any small cover over a product of simplices is a generalized real Bott manifold which is the small cover analogue of \cite[Proposition 6.2]{qtmoverproduct}. \par 

In Section \ref{3} we study the fundamental group of $B_k$. More precisely, in Theorem \ref{fungp}, we give a presentation of $\pi_1(B_k)$ which generalizes the description of the fundamental group of a real Bott manifold as given in \cite[Lemma 3.2]{masudakamishimacohorigid}, \cite[Lemma 3.2]{leebumiterated} and \cite[Theorem 2.1]{umaraisa}. We further derive a characterization of the generalized real Bott manifolds which have abelian fundamental group. Furthermore, we show that $B_k$ is aspherical if and only if it is a real Bott manifold. We study further group theoretic properties of $\pi_1(B_k)$ in Propositions \ref{sol} and \ref{nil}.\par

We begin Section \ref{4} by recalling the description of the cohomology ring with $\mathbb Z_2$-coefficients of the generalized real Bott manifolds. More precisely, in Proposition  \ref{cohoring}, we give a presentation with generators and relations of $H^*(B_k;\mathbb Z_2)$. Further, we give a combinatorial criterion for $B_k$ to be orientable in Theorem \ref{orientable}  and for $B_k$ to be spin in Theorem \ref{spin}, by deriving closed formulae for the first and second Stiefel-Whitney classes of $B_k$ (see Theorem \ref{totswc}, Corollary \ref{w1result} and Corollary \ref{w2result}). These generalize the orientablity criterion given by Kamishima and Masuda for real Bott manifolds in \cite[Lemma 2.2]{masudakamishimacohorigid} and the criterion for spin structure for real Bott manifolds given by Ga\c{s}ior in \cite{gasiorspin} and the authors of the present paper in \cite{umaraisaspin}.

\section{Small cover over a product of simplices}\label{2}
In this section we define small cover over a product of simplices and recall some preliminary results.\par
Let $\Delta^{n_i}$ be the standard $n_i$-simplex for $1\leq i\leq k$ and let $P=\prod_{i=1}^k\Delta^{n_i}$. Then $P$ is a simple polytope of dimension $n=\sum_{i=1}^k n_i$. 

We know that an $m$-simplex has $m+1$ facets. Let $f^i_{l_i}$ for $0\leq l_i\leq n_i$ denote the facets of $\Delta^{n_i}$ for $1\leq i\leq k$. Then the facets of $P$ are as follows :
\begin{equation}
  F^i_{l_i}=\Delta^{n_1}\times\cdots\times\Delta^{n_{i-1}}\times f^i_{l_i}\times\Delta^{n_{i+1}}\times\cdots\times\Delta^{n_k}
\end{equation}
for $0\leq l_i\leq n_i$ and $1\leq i\leq k$. Let $\mathcal F=\{F^i_{l_i}:1\leq i\leq k\,,\, 1\leq l_i\leq n_i\}$

\begin{remark}\label{facetsofP}
 Note that $ F^i_{l_i}\cap F^i_{l_i^\prime}=\Delta^{n_1}\times\cdots\times (f^i_{l_i}\cap f^i_{l_i^\prime})\times\cdots\times\Delta^{n_k}$. If $n_i\geq2$ then $f^i_{l_i}\cap f^i_{l_i'}\neq\emptyset$ for $1\leq l_i,l_i'\leq n_i$. If $n_i=1$, $f^i_{l_i}\cap f^i_{l_i'}\neq\emptyset$ if and only if $l_i=l_i'$. Also for $i\neq j$, we have $ F^i_{l_i}\cap  F^j_{l_j^\prime}=\Delta^{n_1}\times\cdots\times f^i_{l_i}\times\cdots\times f^j_{l_j^\prime}\times\cdots\times\Delta^{n_k}$.\\ 
 Thus, $F^i_{l_i}\cap F^j_{l_j'}=\emptyset$ if and only if $i=j$, $n_i=1$ and $l_i\neq l_i'$.
\end{remark}

Similarly, we know that an $m$-simplex has $m+1$ vertices. Let $v^i_{l_i}$ for $0\leq l_i\leq n_i$ denote the vertices of $\Delta^{n_i}$ for $1\leq i\leq k$. Then the vertex set of $P$ is as follows :
\begin{equation}
 \mathcal V=\{v_{l_1\cdots l_k} = v^1_{l_1}\times\cdots\times v^k_{l_k}: 0\leq l_i\leq n_i\,,\,1\leq i\leq k\}
\end{equation}

We shall assume that $v^i_{l_i}$ is the vertex of $\Delta^{n_i}$ opposite the facet $f^i_{l_i}$. Then it is easy to see that $v_{l_1\cdots,l_k}$ is the vertex at which the facets in $\mathcal F - \{F^1_{l_1},\cdots, F^k_{l_k}\}$ intersect.

\begin{definition}\label{scdefn}
 Let $\lambda:\mathcal F\rightarrow \mathbb Z_2^n$ be defined as follows,
 \begin{equation}
 \begin{split}
 ~& \lambda( F_1^1)=\mathbf e_1,\cdots,\lambda (F^1_{n_1})=\mathbf e_{n_1},\cdots, \lambda( F^k_{n_k})=\mathbf e_n. \\
 ~& \lambda( F^1_0)=\mathbf a_1\,,\,\lambda( F^2_0)=\mathbf a_2,\cdots, \lambda( F^k_0)=\mathbf a_k.
 \end{split}
\end{equation}
where $\mathbf e_i=(0,\cdots,1,\cdots,0)$ (with $1$ in the $i$th position) for $1\leq i\leq n$ and $\mathbf a_i=(\mathbf a_i^1,\cdots,\mathbf a_i^k)\in\mathbb Z_2^n$, for $1\leq i\leq k$ and each $\mathbf a_i^j=(a_{i,1}^j,a_{i,2}^j,\cdots,a_{i,n_j}^j)\in\mathbb Z_2^{n_j}$. We assume that whenever a set of facets of $P$ intersect along a face their images under $\lambda$ form a part of a basis of $\mathbb Z_2^n$. The small cover over $P$ associated to the \emph{characteristic function} $\lambda$ is defined as $M(\lambda)= \mathbb Z^n_2 \times P / \sim$ where $(t, p) \sim (t' , p')$ if and only if $p = p'$ and $t \cdot (t')^{-1}\in G_{F(p)}$ . Here $F(p)$ is the unique face of $P$ which contains $p$ in its relative interior and $G_{F (p)}$ is the rank-$l$ subgroup of $\mathbb Z_2^n$ spanned by the images under $\lambda$ of the $l$ facets of $P$ that intersect at $F(p)$ (see \cite[Section 1.5]{davisjanus}).
\end{definition}

The function $\lambda$ determines the following $k\times n$ matrix $A$, which can be viewed as a $k\times k$ vector matrix
\begin{equation}\label{matrix}
A=
 \begin{pmatrix}
  a_{1,1}^1 & \cdots & a_{1,n_1}^1 & \cdots & a_{1,1}^k & \cdots & a_{1,n_k}^k\\
  \vdots & ~ & \vdots & ~ & \vdots & ~ & \vdots \\
  a_{k,1}^1 & \cdots & a_{k,n_1}^1 & \cdots & a_{k,1}^k & \cdots & a_{k,n_k}^k  
 \end{pmatrix}
 =\begin{pmatrix}
   \mathbf{a}_1^1 & \cdots & \mathbf{a}_1^k\\
   \vdots & \ddots & \vdots \\
   \mathbf{a}_k^1 & \cdots & \mathbf{a}_k^k
  \end{pmatrix}
\end{equation}

Thus $M(\lambda)$ can also be thought of a small cover associated to the matrix $A$ and we sometimes denote it by $M(A)$.\par
\no
\begin{remark}\label{QC}
{\it Quotient construction.} A small cover over a simple convex polytope $P$ of dimension $n$ with $m$ facets can be realized as the quotient of the \emph{moment-angle manifold} of the polytope by certain free action of the compact real torus of dimension $m-n$. It can be seen that for the polytope $P=\prod_{i=1}^{k}\Delta^{n_i}$ the moment-angle manifold is $X=\prod_{i=1}^k S^{\,n_i}$ (see \cite[Section 1.10 and 4.1]{davisjanus}). In particular, it follows that $M(A)$ is the quotient of $X$ by the action of $G=\mathbb Z_2^k$ given by
\begin{equation}\label{action}
\begin{split}
 ~ & (g_1,\cdots,g_k)\cdot\big((x_0^1,\cdots,x_{n_1}^1),\cdots,(x_0^k,\cdots,x_{n_k}^k)\big)\\
 ~ & = \big((g_1\cdot x_0^1,(g_1^{a_{1,1}^1}\cdots g_k^{a_{k,1}^1})\cdot x_1^1,\cdots,(g_1^{a_{1,n_1}^1}\cdots g_k^{a_{k,n_1}^1})\cdot x_{n_1}^1),\cdots,\\
 ~ & \qquad \qquad (g_k\cdot x_0^k,(g_1^{a_{1,1}^k}\cdots g_k^{a_{k,1}^k})\cdot x_1^k,\cdots,(g_1^{a_{1,n_k}^k}\cdots g_k^{a_{k,n_k}^k})\cdot x_{n_k}^k)\big)
 \end{split}
\end{equation}
where $(g_1,\cdots,g_k)\in G$ and $(x_0^i,\cdots,x_{n_i}^i)\in S^{\,n_i}$ for $i=1,\cdots k$ (the proof is analogous to \cite[Proposition 4.3]{qtmoverproduct}).\par
\end{remark}

For $1\leq j_1\leq n_1\,,\,\cdots\,,\,1\leq j_k\leq n_k$ let $A_{j_1\cdots j_k}$ be the following $k\times k$ submatrix of $A$ :
\begin{equation}\label{submatrix}
A_{j_1\cdots j_k}=
 \begin{pmatrix}
  a_{1,j_1}^1 & \cdots & a_{1,j_k}^k\\
  \vdots & ~ & \vdots\\
  a_{k,j_1}^1 & \cdots & a_{k,j_k}^k
 \end{pmatrix}
\end{equation}

The following Proposition is the $\mathbb Z_2$ analogue of \cite[Lemma 3.2]{qtmoverproduct}. 

\begin{proposition}\label{minorbasisequivalence}
 Let $A$ be a matrix of the form (\ref{matrix}) that is associated to the characteristic function $\lambda$ of a small cover over $P$. The following two properties are equivalent,
 \begin{enumerate}
 \item The submatrices $A_{j_1\cdots j_k}$ for  $1\leq j_1\leq n_1\,,\,\cdots\,,\,1\leq j_k\leq n_k$ have the property that all their principal minors are $1$.
 \item If any $n$ facets of $P$ intersect at a vertex then the image of the corresponding facets under $\lambda$ forms a basis of $\mathbb Z_2^n$.
 \end{enumerate}
\end{proposition}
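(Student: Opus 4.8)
The plan is to make the dictionary between the vertices of $P$ and the principal minors of the submatrices $A_{j_1\cdots j_k}$ completely explicit, and then to observe that the basis condition at a vertex collapses, after an elementary row reduction, to a single principal minor being invertible over $\mathbb Z_2$. Throughout I write $\mathbf e^i_l:=\lambda(F^i_l)$ for $1\le l\le n_i$, so that the $\mathbf e^i_l$ (over all $i,l$) are precisely the standard basis of $\mathbb Z_2^n$, and $\mathbf a_i=\lambda(F^i_0)$.

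First I would record exactly which vectors occur at a vertex. Since $v_{l_1\cdots l_k}$ is the intersection of all facets of $\mathcal F$ except $F^1_{l_1},\dots,F^k_{l_k}$, the images under $\lambda$ of the facets meeting at $v_{l_1\cdots l_k}$ are
\[
\{\mathbf e^i_l : 1\le i\le k,\ 1\le l\le n_i,\ l\ne l_i\}\ \cup\ \{\mathbf a_i : l_i\ne 0\}.
\]
Let $S=\{i : l_i\ne 0\}$. For $i\in S$ the vector $\mathbf e^i_{l_i}$ is absent while $\mathbf a_i$ is present, and for $i\notin S$ all of $\mathbf e^i_1,\dots,\mathbf e^i_{n_i}$ are present while $\mathbf a_i$ is absent. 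In particular this set contains every standard basis vector except the $|S|$ vectors $\{\mathbf e^i_{l_i}:i\in S\}$.

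Next I would carry out the reduction. As all standard basis vectors other than $\{\mathbf e^i_{l_i}:i\in S\}$ are present (and independent), the whole set is a basis of $\mathbb Z_2^n$ if and only if the images of $\{\mathbf a_r:r\in S\}$ are independent in the quotient $\mathbb Z_2^n/\langle \mathbf e^i_l : 1\le i\le k,\ l\ne l_i\rangle$, an $|S|$-dimensional space with basis the classes of $\{\mathbf e^s_{l_s}:s\in S\}$. Since the $\mathbf e^s_{l_s}$-coordinate of $\mathbf a_r$ is $a^s_{r,l_s}$, this is equivalent to invertibility of the $|S|\times|S|$ matrix $(a^s_{r,l_s})_{r,s\in S}$, i.e. to $\det(a^s_{r,l_s})_{r,s\in S}=1$. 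Comparing with (\ref{submatrix}), this is exactly the principal submatrix of $A_{j_1\cdots j_k}$ on the rows and columns in $S$, for any $(j_1,\dots,j_k)$ with $j_s=l_s$ for $s\in S$ (the other $j_s$ being irrelevant to this minor). This yields a bijection: the vertex $v_{l_1\cdots l_k}$ with $S=\{i:l_i\ne0\}$ corresponds to the pair $(S,(l_s)_{s\in S})$, and conversely every $T\subseteq\{1,\dots,k\}$ with a choice $1\le j_s\le n_s$ ($s\in T$) arises from the vertex $l_s=j_s$ ($s\in T$), $l_s=0$ ($s\notin T$); the count $\prod_i(n_i+1)$ matches on both sides, the case $S=\varnothing$ being the empty minor ($=1$ by convention, matching that $v_{0\cdots0}$ always gives the standard basis).

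Finally both implications are immediate. If (1) holds, every principal minor of every $A_{j_1\cdots j_k}$ is $1$, so the minor attached to each vertex is $1$ and (2) holds. Conversely, assuming (2), any principal minor of any $A_{j_1\cdots j_k}$ indexed by $T$ depends only on $(j_s)_{s\in T}$ and equals the minor attached to the corresponding vertex; since the facet images at that vertex form a basis by (2), the minor is $1$, giving (1). The only step demanding care is the quotient bookkeeping in the reduction — checking that the surviving standard directions genuinely span a complement to the missing ones $\{\mathbf e^i_{l_i}:i\in S\}$; once that is pinned down, the identification of the resulting determinant with a principal submatrix of $A_{j_1\cdots j_k}$ is forced and the equivalence follows.
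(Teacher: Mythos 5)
Your proof is correct and follows essentially the same route as the paper's: both set up the explicit dictionary between vertices of $P$ and principal minors of the $A_{j_1\cdots j_k}$, and reduce the basis condition at a vertex to a single $|S|\times|S|$ determinant over $\mathbb Z_2$. The only cosmetic difference is that you perform the reduction by passing to the quotient by the surviving standard basis vectors, whereas the paper builds the full $n\times n$ matrix $A'$ of facet images and observes directly that $\det A'$ equals the principal minor $\delta$.
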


\begin{proof}
 Let $\delta=\begin{vmatrix}
  a_{i_1,j_{i_1}}^{i_1} & \cdots & a_{i_1,j_{i_r}}^{i_r}\\
  \vdots & ~ & \vdots\\
  a_{i_r,j_{i_1}}^{i_1} & \cdots & a_{i_r,j_{i_r}}^{i_r}
 \end{vmatrix}$ be a principal minor of $A_{j_1\cdots j_k}$. Let $I$ be the $n\times n$ identity matrix. Let us identify the columns of $I$ with $\lambda(F^1_1),\cdots,\lambda(F^1_{n_1}),\cdots,\lambda(F^k_1),\cdots,\lambda(F^k_{n_k})$ in that order. Let $A'$ be the matrix obtained from $I$ by replacing the columns corresponding to $\lambda(F^{i_1}_{j_{i_1}}),\cdots,\lambda(F^{i_r}_{j_{i_r}})$ by the $i_1^{\text{ th}}$, $\cdots, i_r^{\text{ th}}$ rows of the matrix $A$ in (\ref{matrix}). It is easy to see that $\delta=\text{det }A'$. Now each column of $A'$ is the image of some facet of $P$ under $\lambda$ and these facets intersect at the vertex $v_{J}$ where $J=0\cdots 0\,j_{i_1}\,0\cdots 0\,j_{i_2}\,0\cdots0\,j_{i_r}\,0\cdots 0$. Further, we know that $\text{det }A'\equiv1\bmod2$ if and only if the columns of $A'$ form a basis of $\mathbb Z_2^n$. Hence the proposition.
\end{proof}

\begin{proposition}\label{conjugate}
 Let $A$ be a $k\times k$ vector matrix as in (\ref{matrix}) associated to the characteristic function of a small cover over a product of simplices . Then $A$ is conjugate by a permutation matrix to a unipotent upper triangular vector matrix of the following form :
 \begin{equation}\label{UTmatrix}
  \begin{pmatrix}
   \mathbf 1 & \mathbf b_1^2 & \mathbf b_1^3 & \cdots & \mathbf b_1^k\\
  \mathbf 0 & \mathbf 1 & \mathbf b_2^3 & \cdots & \mathbf b_2^k\\
  \vdots & ~ & ~ & ~ & \vdots \\
  \mathbf 0 & \cdots & \cdots & \mathbf 1 & \mathbf b_{k-1}^k\\
  \mathbf 0 & \cdots & \cdots & \mathbf 0 & \mathbf 1
  \end{pmatrix}
 \end{equation}
 where $\mathbf 0=(0,\cdots,0),\,\mathbf 1=(1,\cdots,1)$ of appropriate sizes and $\mathbf b_i^j=(b_{i,1}^j,\cdots,b_{i,n_j}^j)\in\mathbb Z_2^{n_j}$. Here multiplication of matrix entries is scalar multiplication and addition is vector addition.
\end{proposition}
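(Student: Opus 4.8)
The plan is to translate the statement into a purely combinatorial problem about a directed graph and then invoke the minor condition of Proposition \ref{minorbasisequivalence} to show this graph is acyclic. First I would note that conjugating the $k\times k$ vector matrix $A$ by the permutation matrix of some $\sigma\in S_k$ simultaneously permutes its block rows and block columns, sending the $(i,j)$ block to $\mathbf a^{\sigma^{-1}(j)}_{\sigma^{-1}(i)}$ (so the block column widths $n_j$ are permuted along). Hence reaching the form (\ref{UTmatrix}) is equivalent to producing a $\sigma$ for which $\mathbf a^q_p=\mathbf 0$ whenever $\sigma(p)>\sigma(q)$, together with the diagonal condition $\mathbf a^i_i=\mathbf 1$ for every $i$.

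Reading Proposition \ref{minorbasisequivalence} for small index sets pins down the local structure. The $1\times1$ principal minors of the submatrices $A_{j_1\cdots j_k}$ are exactly the entries $a^i_{i,j_i}$, so their all being $1$ forces $\mathbf a^i_i=\mathbf 1$ for every $i$; the diagonal blocks are thus already as required. The $2\times2$ principal minor on a pair $\{p,q\}$ equals $1-a^q_{p,j_q}\,a^p_{q,j_p}$, and its being odd for all column choices forces $a^q_{p,j_q}\,a^p_{q,j_p}=0$; hence for each pair at most one of $\mathbf a^q_p,\mathbf a^p_q$ is nonzero. I would then define a digraph $G$ on $\{1,\dots,k\}$ with an arc $p\to q$ precisely when $\mathbf a^q_p\neq\mathbf 0$. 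The observation above says $G$ has no $2$-cycle, and a linear extension of $G$ is exactly a $\sigma$ as sought, so it remains only to prove $G$ is acyclic.

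The main step—and the place I expect the real work—is ruling out longer directed cycles. I would argue by contradiction: from any cycle pass to a shortest one $p_1\to p_2\to\cdots\to p_r\to p_1$. Minimality forces chord-freeness, since any chord would splice out a strictly shorter cycle, while the reverse of a cycle arc is already excluded by the no-$2$-cycle property; consequently $\mathbf a^{p_t}_{p_s}=\mathbf 0$ for every pair $(p_s,p_t)$ that is not a consecutive pair of the cycle. Now form the corresponding $r\times r$ principal minor of a suitable $A_{j_1\cdots j_k}$: for each $t$ choose the column index $j_{p_t}$ witnessing $\mathbf a^{p_t}_{p_{t-1}}\neq\mathbf 0$ (indices read cyclically), so the resulting matrix $M$ has $M_{s,s}=1$ (diagonal blocks are $\mathbf 1$), $M_{s,s+1}=1$ cyclically (the cycle arcs), and all remaining entries $0$ (chord-freeness). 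Thus $M=I_r+C_r$ with $C_r$ the cyclic shift, and $\det M=1-(-1)^r\in\{0,2\}$ is even, contradicting Proposition \ref{minorbasisequivalence}, which requires this principal minor to be $1$.

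Having shown $G$ is acyclic, I would take any linear extension $\sigma$ and conjugate $A$ by the associated permutation matrix; by construction every below-diagonal block vanishes while the diagonal blocks remain $\mathbf 1$, yielding exactly the form (\ref{UTmatrix}). The only delicate points are the reduction to a chordless shortest cycle and the parity of $\det(I_r+C_r)$; everything else is routine bookkeeping with the identification of principal minors.
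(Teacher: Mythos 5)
Your proof is correct, but it takes a genuinely different route from the paper's. The paper's own proof is essentially a citation: it notes (via Proposition \ref{minorbasisequivalence}, using that $\lambda$ is a characteristic function of a small cover) that every principal minor of every submatrix $A_{j_1\cdots j_k}$ is $1$, and then appeals to Lemma 5.1 of Choi--Masuda--Suh in \cite{qtmoverproduct} for the conjugation statement. What you have done is re-prove that cited lemma from scratch in the $\mathbb Z_2$ setting: the $1\times1$ principal minors force $\mathbf a_i^i=\mathbf 1$, the $2\times 2$ minors exclude $2$-cycles in your digraph (arcs taken between distinct indices, of course), and the shortest-cycle/chordless argument plus the parity computation $\det(I_r+C_r)=1-(-1)^r\equiv 0 \bmod 2$ rules out longer cycles, so the digraph is acyclic and any topological order yields the conjugating permutation. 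Each step checks out; in particular, identifying your cyclically ordered matrix $M$ with a principal minor of a suitable $A_{j_1\cdots j_k}$ is legitimate because simultaneously permuting rows and columns does not change the determinant, and the unused column indices $j_i$ for $i$ outside the cycle are irrelevant to that minor. The trade-off is clear: the paper's proof is two lines and inherits whatever generality the cited lemma carries, while yours makes the proposition self-contained and elementary, at the cost of redoing work the literature already supplies.
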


\begin{proof}
 Since the entries of $A$ are all from $\mathbb Z_2$ it follows from Proposition \ref{minorbasisequivalence} that all the principal minors of the submatrices $A_{j_1\cdots j_k}$ for  $1\leq j_1\leq n_1\,,\,\cdots\,,\,1\leq j_k\leq n_k$ are 1. The result then follows from \cite[Lemma 5.1]{qtmoverproduct}.
\end{proof}

\begin{remark}\label{J}
Note that we can view $P$ as $\prod_{i=1}^k\Delta^{n_{\sigma(i)}}$ where $\sigma\in S_k$. Then we obtain a matrix $\tilde A=E_\sigma AE_\sigma^{-1}$ where $E_\sigma$ is the permutation matrix corresponding to $\sigma$. It is easy to see that the small covers associated to $A$ and $\tilde A$ are equivariantly diffeomorphic (the proof is similar to that of \cite[Proposition 5.1]{masudachoidigraph} also see \cite[Section 5]{qtmoverproduct}). In view of this fact and Proposition \ref{conjugate} we will henceforth assume that the matrix $A$ has the form (\ref{UTmatrix}).
\end{remark}

\begin{proposition}\label{scisgbt}
Every small cover over a product of simplices is a generalized real Bott manifold.
\end{proposition}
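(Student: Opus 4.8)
The plan is to argue by induction on $k$, combining the quotient construction of Remark \ref{QC} with the upper triangular normal form of Proposition \ref{conjugate}. By Remark \ref{J} I may assume throughout that $A$ has the form (\ref{UTmatrix}), so that $\mathbf a_i^j=\mathbf 0$ for $i>j$ and $\mathbf a_i^i=\mathbf 1$. The base case $k=1$ is immediate: here $A=(\mathbf 1)$ and $M(A)=S^{\,n_1}/\langle g_1\rangle=\mathbb{RP}^{n_1}$, which is the projectivization of the trivial bundle $\mathbb R^{n_1+1}$ over a point, hence the first stage $B_1$ of a tower.

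For the inductive step the crucial reading of the explicit action (\ref{action}), made possible by the upper triangular shape, is that $g_k$ acts only on the last sphere $S^{\,n_k}$ — and there antipodally, since $\mathbf a_k^k=\mathbf 1$ forces every coordinate to be multiplied by $g_k$ — while $g_1,\dots,g_{k-1}$ act on $X'=\prod_{i=1}^{k-1}S^{\,n_i}$ exactly through the top-left $(k-1)\times(k-1)$ block $A'$ of $A$. Thus $G'=\langle g_1,\dots,g_{k-1}\rangle\cong\mathbb Z_2^{k-1}$ acts freely on $X'$ with quotient the small cover $B_{k-1}:=M(A')$, which by the induction hypothesis is a generalized real Bott manifold, and $X'\to B_{k-1}$ is a principal $G'$-cover.

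Next I would construct the bundle. For $1\le s\le n_k$ define the character $\chi_s\colon G'\to\{\pm1\}$ by $\chi_s(g_i)=(-1)^{b_{i,s}^k}$, and let $\chi_0$ be trivial; these produce $n_k+1$ real line bundles $L_{\chi_s}=X'\times_{G'}\mathbb R_{\chi_s}$ over $B_{k-1}$. Setting $\mathbb E=\bigoplus_{s=0}^{n_k}L_{\chi_s}=X'\times_{G'}\mathbb R^{\,n_k+1}$, I claim $M(A)\cong P(\mathbb E)$, the fiberwise real projectivization. To prove this I would compare two descriptions of one quotient: on the one hand $P(\mathbb E)=(X'\times\mathbb{RP}^{n_k})/G'$, where each $g_i$ acts on $\mathbb{RP}^{n_k}$ by the projectivization of the diagonal character action; on the other hand, writing $M(A)=(X'\times S^{\,n_k})/(G'\times\langle g_k\rangle)$ and dividing first by the antipodal $g_k$ yields $(X'\times\mathbb{RP}^{n_k})/G'$ in which the residual $G'$-action reads off precisely the exponents $b_{i,s}^k$ from (\ref{action}). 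Since the two residual actions coincide, this gives the desired diffeomorphism exhibiting $B_k=M(A)$ as the projectivization of a Whitney sum of $n_k+1$ real line bundles over $B_{k-1}$, completing the induction.

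The main obstacle is the bookkeeping in this comparison: matching the residual $G'$-action on $\mathbb{RP}^{n_k}$ after quotienting by $g_k$ with the projectivized linear character action defining $P(\mathbb E)$, and verifying that both the decoupling of $g_k$ from $X'$ and the identification of $A'$ as the characteristic matrix of a small cover over $\prod_{i=1}^{k-1}\Delta^{n_i}$ rest essentially on the upper triangular form (\ref{UTmatrix}). Freeness of all the relevant actions — needed to ensure each quotient is a manifold and each projectivization a genuine fiber bundle — should be recorded explicitly, but it follows directly from the quotient construction recalled in Remark \ref{QC}.
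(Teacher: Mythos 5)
Your proof is correct and takes essentially the same approach as the paper: both reduce $A$ to the upper-triangular form (Proposition \ref{conjugate}), use the quotient construction of Remark \ref{QC}, observe that the last generator $g_k$ decouples from $X'=\prod_{i=1}^{k-1}S^{\,n_i}$ and acts antipodally on $S^{\,n_k}$, and identify $M(A)$ as the projectivization of the associated bundle $X'\times_{G'}\mathbb R^{\,n_k+1}$, which splits as a Whitney sum of $n_k+1$ line bundles given by the characters $g_i\mapsto(-1)^{b_{i,s}^k}$. The only difference is organizational: you induct on $k$ by peeling off the top stage, while the paper runs the identical argument at every level at once, defining $B_j=X_j/G_j$ and exhibiting each $p_j\colon B_j\to B_{j-1}$ as the projectivization of $(X_{j-1}\times\mathbb R^{\,n_j+1})/G_{j-1}$.
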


\begin{proof}
 The proof is the small cover analogue of the proof of \cite[Proposition 6.2]{qtmoverproduct}.  Let $M(A)$ be the small cover over $\prod_{i=1}^k\Delta^{n_i}$ associated to the matrix $A$. We may assume by Proposition \ref{conjugate} that $A$ is of the form (\ref{UTmatrix}). Let $X_j=\prod_{i=1}^jS^{\,n_i}$, which is the moment-angle manifold of $\prod_{i=1}^j\Delta^{n_i}$ for $j=1,\cdots,k$. The group $G_k=\mathbb Z_2^k$ acts on $X_k$ as in (\ref{action}) and $X_k/G_k=M(A)$. Let $B_j=X_j/G_k$ where $G_k$ acts on $X_j$ as follows 
 \begin{equation*}
\begin{split}
 ~ & (g_1,\cdots,g_k)\cdot\big((x_0^1,\cdots,x_{n_1}^1),\cdots,(x_0^j,\cdots,x_{n_j}^j)\big)\\
 ~ & = \big((g_1\cdot x_0^1,(g_1^{a_{1,1}^1}\cdots g_k^{a_{k,1}^1})\cdot x_1^1,\cdots,(g_1^{a_{1,n_1}^1}\cdots g_k^{a_{k,n_1}^1})\cdot x_{n_1}^1),\cdots,\\
 ~ & \qquad \qquad (g_j\cdot x_0^j,(g_1^{a_{1,1}^j}\cdots g_k^{a_{k,1}^j})\cdot x_1^j,\cdots,(g_1^{a_{1,n_j}^j}\cdots g_k^{a_{k,n_j}^j})\cdot x_{n_j}^j)\big)
 \end{split}
\end{equation*}
 We get a sequence $M(A)=B_k\xrightarrow{p_k}B_{k-1}\rightarrow\cdots B_1\xrightarrow{p_1}B_0=\{\text{pt}\}$ induced by the natural projections of $X_j$ on $X_{j-1}$ for $j=1,\cdots,k$. 
 
 Since $A$ is of the form (\ref{UTmatrix}), the last $k-j$ factors of $G_k$ act trivially on $X_j$ so that the $G_k$ action reduces to a $G_j$ action on $X_j$ with $X_j/G_k\simeq X_j/G_j$. Further, the last factor of $G_j$ acts on the last factor of $X_j$ as scalar multiplication and trivially on other factors. It follows that $p_j:B_j=X_j/G_j\rightarrow B_{j-1}=X_{j-1}/G_{j-1}$ is the projectivization of a real vector bundle $\xi_j$ over $B_{j-1}$ associated to the principal $G_{j-1}$-bundle $X_{j-1}\rightarrow X_{j-1}/G_{j-1}=B_{j-1}$. In fact, $\xi_j=(X_{j-1}\times V_j)/G_{j-1}$ where $V_j=\mathbb R^{n_j+1}$ with the action of $G_{j-1}$ given by 
\begin{equation*}
\begin{split}
 ~ & (g_1,\cdots,g_{j-1})\cdot(x_0^j,\cdots,x_{n_j}^j)\\
 ~ & \qquad= \big(x_0^j,(g_1^{a_{1,1}^j}\cdots g_{j-1}^{a_{j-1,1}^j})\cdot x_1^j,\cdots,(g_1^{a_{1,n_j}^j}\cdots g_{j-1}^{a_{j-1,n_j}^j})\cdot x_{n_j}^j\big).
 \end{split}
\end{equation*}
\end{proof}

Let $N=\mathbb Z^n$ be the lattice with basis $\{u_1^1,\cdots,u_{n_1}^1,\cdots,u_1^k,\cdots,u_{n_k}^k\}$.
Define $u^i_0=-(b_{i,1}^1\cdot u_1^1+\cdots+b_{i,n_1}^1\cdot u^1_{n_1}+\cdots+b_{i,1}^k\cdot u_1^k+\cdots+b_{i,n_k}^k\cdot u_{n_k}^k)$ for $1\leq i\leq k$ where $b_{i,l_i}^j$ are entries from the matrix (\ref{UTmatrix}). Let $\Sigma_k$ be the fan in $N$ consisting of cones generated by any sub-collection of $\{u^1_0,\cdots,u^1_{n_1},\cdots,u^k_0,\cdots,u^k_{n_k}\}$ that doesn't contain $\{u^i_0,\cdots,u^i_{n_i}\}$ for $1\leq i\leq k$.

\begin{proposition}
 The generalized real Bott manifold $B_k$ is the real toric variety associated to the smooth projective fan $\Sigma_k$. 
\end{proposition}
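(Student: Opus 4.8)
The plan is to recognize $\Sigma_k$ as a smooth projective fan whose associated real toric variety is a small cover over $P=\prod_{i=1}^k\Delta^{n_i}$, and then to check that the resulting characteristic function is exactly the one giving $B_k=M(A)$. The general principle I would invoke is that the real locus $X_\Sigma(\mathbb R)$ of a smooth complete polytopal toric variety is a small cover over the polytope dual to $\Sigma$, whose characteristic function assigns to each facet the reduction modulo $2$ of the primitive generator of the corresponding ray (this is implicit in \cite[Section 1.5]{davisjanus}). Thus the proof splits into three tasks: identifying the face poset of $\Sigma_k$ with that of $P$, verifying smoothness and projectivity of $\Sigma_k$, and matching the induced characteristic function with $\lambda$.

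For the combinatorial identification I would match the $i$-th block of rays $\{u_0^i,\dots,u_{n_i}^i\}$ with the facets $\{F_0^i,\dots,F_{n_i}^i\}$ of the factor $\Delta^{n_i}$. By definition a subset of the rays spans a cone of $\Sigma_k$ precisely when it omits at least one generator from each block; by Remark \ref{facetsofP} (and the elementary fact that the full set of facets of a simplex has empty common intersection) the corresponding facets of $P$ have nonempty intersection under exactly the same condition. This yields an order-reversing isomorphism of face posets, so $\Sigma_k$ is combinatorially the normal fan of $P$ and is therefore complete, while projectivity follows since $P$ is a polytope. For smoothness I would check that every maximal cone, obtained by deleting one generator $u_{l_i}^i$ from each block, is spanned by a $\mathbb Z$-basis of $N$. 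Expanding the determinant of the $n\times n$ matrix of generators along the retained standard basis vectors $u_l^i$ ($l\geq1$), it reduces to a principal minor of the submatrix $A_{l_1\cdots l_k}$ of (\ref{submatrix}); since $A$ is unipotent upper triangular, all such minors equal $1$ by Proposition \ref{minorbasisequivalence}, so the determinant is $\pm1$ and $\Sigma_k$ is smooth.

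It then remains to compute the characteristic function induced by $\Sigma_k$. Reducing the ray generators modulo $2$, each $u_l^i$ with $l\geq1$ maps to the standard vector $\mathbf e$, matching $\lambda(F^i_{l_i})$, while the defining relation $u_0^i=-\sum_{j,l} b_{i,l}^j\,u_l^j$ reduces to $\sum_{j,l} b_{i,l}^j\,\mathbf e$ because $-1\equiv1\pmod 2$. As the diagonal blocks of (\ref{UTmatrix}) are $\mathbf 1$ and the strictly lower blocks vanish, this sum is precisely the $i$-th row of $A$, namely $\lambda(F^i_0)$. Hence the characteristic function of the small cover $X_{\Sigma_k}(\mathbb R)$ coincides with $\lambda$, giving $X_{\Sigma_k}(\mathbb R)=M(A)=B_k$.

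I expect the main obstacle to be the determinant computation in the smoothness step, where one must track the signs and the index bookkeeping carefully to see that it is genuinely a principal minor of $A_{l_1\cdots l_k}$, together with pinning down (or citing in a form adequate for $\mathbb Z_2$-coefficients) the statement that the real locus of a smooth polytopal fan is the small cover determined by the mod $2$ ray generators. Once these are in place, the comparison of characteristic functions is the routine computation carried out above.
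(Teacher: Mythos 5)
Your proposal takes a genuinely different route from the paper's. The paper argues by induction on $k$: it uses Lemma \ref{ELB} to replace the a priori merely topological real line bundles $L_1,\dots,L_{n_k}$ defining $B_k=\mathbb P(\mathbf 1\oplus L_1\oplus\cdots\oplus L_{n_k})$ by $T$-equivariant algebraic ones, encodes them by support functions, and identifies $B_k$ with the real toric variety of $\Sigma_k$ realized as a split fibration of fans $0\to(N'',\Sigma'')\to(N,\Sigma_k)\to(N',\Sigma')\to 0$, via the real analogue of \cite[Proposition 1.33]{odabook}. You instead compare $\Sigma_k$ directly with the small cover data. The sound parts of your plan are the poset comparison, the smoothness computation (though note that what you actually need is that the integral matrix (\ref{UTmatrix}) is unipotent upper triangular, so the relevant minors equal $1$ in $\mathbb Z$; Proposition \ref{minorbasisequivalence} is only a mod $2$ statement), and the matching of the mod $2$ ray generators with $\lambda$. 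Note also that your argument identifies $X_{\Sigma_k}(\mathbb R)$ with $M(A)$; to obtain the Proposition for a generalized real Bott manifold defined as a tower of projectivizations of sums of arbitrary real line bundles, you must additionally invoke \cite{kurokiprojbundles} to know that every such tower is a small cover over $\prod_{i}\Delta^{n_i}$ — the paper's route through Lemma \ref{ELB} is designed precisely to avoid assuming this.

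The genuine gap is completeness and, above all, projectivity of $\Sigma_k$. You assert that since the face poset of $\Sigma_k$ matches that of the normal fan of $P$, the fan ``is therefore complete, while projectivity follows since $P$ is a polytope.'' Neither implication holds: completeness is a property of the support $|\Sigma_k|$, not of its face poset, and combinatorial equivalence with the normal fan of a simple polytope does not imply projectivity — already in dimension $3$ there exist smooth complete non-projective toric varieties, even though by Steinitz's theorem every complete simplicial fan in $\mathbb R^3$ is combinatorially equivalent to the boundary complex of a simplicial polytope. This is not a cosmetic issue here: ``smooth projective'' is part of the assertion of the Proposition and is relied on later (e.g.\ in Lemma \ref{ELB} and Proposition \ref{cohoring}); moreover, the Davis--Januszkiewicz principle you invoke — that the real locus of the toric variety is the small cover over the dual polytope with characteristic function the mod $2$ ray generators, cf.\ \cite{davisjanus} — is itself formulated for polytopal fans, so the step you call routine presupposes exactly what is missing. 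To close the gap you would need either an explicit strictly convex piecewise linear support function on $\Sigma_k$, or the paper's inductive fibration argument, in which projectivity is inherited from the projectivization of a sum of algebraic line bundles over the projective base $B_{k-1}$.
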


\begin{proof}
 We proceed by induction on $k$. When $k=1$ we have $B_1=\mathbb{RP}^n_1$ which is the real toric variety associated to the fan $\Sigma_1$. Let $N'=\mathbb Z^{n-n_k}$ be the lattice with basis $\{(u_1^1)',\cdots,(u_{n_1}^1)',\cdots,(u_1^{k-1})',\cdots,(u_{n_{k-1}}^{k-1})'\}$. For $1\leq i\leq k-1$ let $$(u^i_0)'=-\big(b_{i,1}^1\cdot (u_1^1)'+\cdots+b_{i,n_{k-1}}^{k-1}\cdot (u^{k-1}_{n_{k-1}})'\big).$$ Consider the fan $\Sigma'$ in $N'$ consisting of cones generated by any sub-collection of \\ $\{(u_0^1)',\cdots,(u_{n_1}^1)',\cdots,(u_{0}^{k-1})',\cdots,(u_{n_{k-1}}^{k-1})'\}$ that doesn't contain $\{(u_0^i)',\cdots,(u_{n_i}^i)'\}$ for all $1\leq i\leq k-1$. Then $\Sigma'$ is isomorphic to the fan $\Sigma_{k-1}$. By the induction hypothesis $B_{k-1}$ is the real toric variety associated to the smooth projective fan $\Sigma_{k-1}\simeq\Sigma'$. Now $B_k=\mathbb P(\mathbf 1\oplus L_1\oplus\cdots\oplus L_{n_k})$ where  $L_1,\cdots,L_{n_k}$ are real line bundles over $B_{k-1}$. By Lemma \ref{ELB} each $L_i$ for $1\leq i\leq n_k$, is isomorphic to a $T$-equivariant real algebraic line bundle $L_i'$ over $B_{k-1}$ where $T=(\mathbb R^*)^{n-n_k}$. Hence $B_k$ is homeomorphic to $\mathbb P(\mathbf 1\oplus L_1'\oplus\cdots\oplus L_{n_k}')$. Thus without loss of generality we can assume that $L_i$ for $1\leq i\leq n_k$ is a $T$-equivariant real algebraic line bundle over $B_{k-1}$. Let $h_1,\cdots,h_{n_k}$ be the \emph{support functions} corresponding to $L_1,\cdots,L_{n_k}$ respectively. In view of Remark \ref{ELBrem}, 
 \begin{equation}\label{hj} h_j((u^i_{l_i})')= 0\qquad \text{for}\quad1\leq i\leq k-1\,,\,1\leq l_i\leq n_i \end{equation}
 and let \begin{equation}b_{i,j}^k:=h_j((u^i_0)')\qquad\text{for}\quad1\leq i\leq k-1.\end{equation}

 Let $N''=\mathbb Z^{n_k}$ with basis $\{(u_1^k)'',\cdots,(u_{n_k}^k)''\}$. Let $(u_0^k)'':=-((u_1^k)''+\cdots+(u_{n_k}^k)'')$. Let $\Sigma''$ denote the fan in $N''$ consisting of cones generated by any proper subset of $\{(u_0^k)'',\cdots,(u_{n_k}^k)''\}$. We get an exact sequence of fans 
 \begin{equation}\label{esf}
  0\rightarrow (N'',\Sigma'')\xrightarrow{\phi''}(N,\Sigma_k)\xrightarrow{\phi'}(N',\Sigma')\rightarrow0
 \end{equation}
 where, $\phi''((u_{l_k}^k)'')=u_{l_k}^k$ for $1\leq l_k\leq n_k$ and $\phi'(u^i_{l_i})=(u^i_{l_i})'$ for $1\leq i\leq k-1\,,\,1\leq l_i\leq n_i$ and $\phi'(u^k_{l_k})=0$ for $1\leq l_k\leq n_k$.
 
 Define $\psi:N'\simeq\mathbb Z_2^{n-n_k}\rightarrow \mathbb Z_2^n\simeq N$ by $\psi(y)=\big(y,-h_1(y),\cdots,-h_{n_k}(y)\big)$. Then by (\ref{hj}) $\psi$ gives a splitting of (\ref{esf}). That is $N=N'\oplus N''$. Further, if we let $\tilde{\sigma}$ denote the image of any $\sigma\in\Sigma'$ under $\psi$ then,
 $$\tilde{\Sigma}=\{\tilde{\sigma}:\sigma\in\Sigma'\}$$
 It can be seen that $$\Sigma_k=\{\tilde{\sigma}+\sigma'':\sigma\in\Sigma'\text{ and } \sigma''\in\Sigma''\}$$
 Then from the real analogue of \cite[Proposition 1.33]{odabook} we have that $B_k$ is the real toric variety corresponding to the smooth projective fan $\Sigma_k$.
 
 It also follows independently from \cite[Section 4.1 and Section 4.2]{kurokiprojbundles}, that $B_k$ is a small cover over the product of simplices $\prod_{i=1}^k\Delta^{n_i}$.
\end{proof}

\begin{lemma}\label{ELB}
 Let $X_\Sigma$ be the real toric variety with dense torus $(\mathbb R^*)^n$, associated to the smooth projective fan $\Sigma$ in the lattice $N\simeq\mathbb Z^n$. Every real line bundle over $X_\Sigma$ is isomorphic to an $(\mathbb R^*)^n$-equivariant algebraic real line bundle.
\end{lemma}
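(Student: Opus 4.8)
The plan is to reduce the statement to a computation of first Stiefel--Whitney classes, exploiting the fact that real line bundles are classified by $H^1(-;\mathbb Z_2)$ together with the toric description of this group. Since the structure group of a real line bundle is $O(1)\cong\mathbb Z_2$ and $BO(1)\simeq K(\mathbb Z_2,1)$, the assignment $L\mapsto w_1(L)$ is a bijection between isomorphism classes of real line bundles over $X_\Sigma$ and $H^1(X_\Sigma;\mathbb Z_2)$; moreover $w_1$ is additive with respect to tensor product, and every real line bundle $L$ satisfies $L\otimes L\cong$ (trivial bundle) because $H^1(X_\Sigma;\mathbb Z_2)$ is a $\mathbb Z_2$-vector space. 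Consequently it suffices to produce, for each element of a spanning set of $H^1(X_\Sigma;\mathbb Z_2)$, an $(\mathbb R^*)^n$-equivariant algebraic real line bundle realizing it: arbitrary tensor products of these are again equivariant and algebraic, so their isomorphism classes realize every class in $H^1(X_\Sigma;\mathbb Z_2)$, hence every real line bundle up to isomorphism.

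Next I would invoke the real analogue of the standard toric presentation of cohomology. For a smooth complete fan $\Sigma$ with rays $\Sigma(1)=\{\rho_1,\dots,\rho_m\}$ and primitive generators $u_{\rho_1},\dots,u_{\rho_m}$, the $\mathbb Z_2$-cohomology of the associated real toric variety is the $\mathbb Z_2$-face ring
\begin{equation*}
H^*(X_\Sigma;\mathbb Z_2)\cong\mathbb Z_2[x_{\rho_1},\dots,x_{\rho_m}]\big/(\mathcal I_{SR}+\mathcal J),
\end{equation*}
where each $x_{\rho}$ has degree one, $\mathcal I_{SR}$ is the Stanley--Reisner ideal of $\Sigma$, and $\mathcal J$ is generated by the linear forms $\sum_{\rho}\langle m,u_\rho\rangle\,x_{\rho}$ (reduced mod $2$) for $m$ in the dual lattice. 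In particular $H^1(X_\Sigma;\mathbb Z_2)$ is spanned by the degree-one classes $x_{\rho_1},\dots,x_{\rho_m}$. This is the $\mathbb Z_2$ version of the computation underlying Proposition \ref{cohoring}, and it furnishes exactly the spanning set I need.

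It then remains to realize each generator $x_\rho$ by an equivariant algebraic bundle. For each ray $\rho$ there is a torus-invariant Weil divisor $D_\rho$; since $X_\Sigma$ is smooth, $D_\rho$ is Cartier, so the associated sheaf $\mathcal O_{X_\Sigma}(D_\rho)$ is an $(\mathbb R^*)^n$-equivariant algebraic real line bundle, described concretely by its support function $h_\rho$ (the piecewise-linear function dual to $D_\rho$). I would then check that $w_1\big(\mathcal O_{X_\Sigma}(D_\rho)\big)=x_\rho$ in $H^1(X_\Sigma;\mathbb Z_2)$, mirroring the identification $c_1(\mathcal O(D_\rho))=[D_\rho]$ of the complex theory but now with $\mathbb Z_2$-coefficients and halved degrees. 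Granting this, the classes $w_1(\mathcal O_{X_\Sigma}(D_\rho))$ span $H^1(X_\Sigma;\mathbb Z_2)$, and the reduction carried out in the first paragraph completes the proof.

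The main obstacle is the transfer of the complex toric dictionary to the real, $\mathbb Z_2$-graded setting: both the cohomology presentation above and, especially, the identity $w_1(\mathcal O_{X_\Sigma}(D_\rho))=x_\rho$ must be justified over $\mathbb R$ rather than quoted from the complex theory, since degrees are halved and orientation subtleties enter. I would handle this by appealing to the real analogue of the results in \cite{odabook} together with the cell-structure computation of $H^*(X_\Sigma;\mathbb Z_2)$ in the spirit of \cite{davisjanus}; alternatively, in the generalized real Bott case at hand, one can argue inductively along the stages of the tower, where each $X_\Sigma$ is a projective bundle and the generators $x_\rho$, the invariant divisors $D_\rho$, and their support functions can all be tracked explicitly through the projections $p_j$. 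Verifying that the equivariant algebraic structure (equivalently, the support function) is preserved under these inductive steps is the delicate point.
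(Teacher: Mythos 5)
Your proposal follows essentially the same route as the paper's proof: both classify real line bundles by $w_1\in H^1(X_\Sigma;\mathbb Z_2)$, invoke Jurkiewicz's theorem that this group is spanned by the classes $[V(\rho_j)]$ of the invariant codimension-one subvarieties, realize each such class by the canonical $(\mathbb R^*)^n$-equivariant algebraic line bundle attached to the corresponding ray (your $\mathcal O_{X_\Sigma}(D_\rho)$ with its support function is exactly the paper's $\mathscr L_j$ constructed from the piecewise-linear data $\{m_\sigma\}$), and conclude by expressing an arbitrary real line bundle as a tensor product of these. The one step you flag as delicate, namely $w_1(\mathcal O_{X_\Sigma}(D_\rho))=x_\rho$, is settled in the paper by exhibiting an equivariant section $s_j$ of $\mathscr L_j$ whose zero locus is $V(\rho_j)$, so that $w_1(\mathscr L_j)$ is the Poincar\'e dual of $[V(\rho_j)]$.
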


\begin{proof}
 We know that $X_\Sigma=\bigcup_{\sigma\in\Sigma}U_\sigma$ where $U_\sigma=\operatorname{Hom}_{sg}(M\cap\sigma^\vee,\mathbb R)$ , $\sigma^\vee$ is the dual cone of $\sigma$ in the dual lattice $M$. Let $\{\rho_j:1\leq j\leq d\}$ be the edge vectors in $\Sigma$ and let $v_j$ be the primitive vector generating the edge $\rho_j$ for $1\leq j\leq d$. Let $\mathscr L_j$ be the algebraic real line bundle corresponding to the piecewise linear data $\{m_\sigma\}_{\sigma\in\Sigma}$ (see \cite[Proposition 2.1]{odabook}) where $m_\sigma\in M$ is defined as $m_\sigma=0$ if $v_j$ is not on an edge of $\sigma$ and if $v_j$ is on an edge of $\sigma$ then $-m_\sigma$ is the element in the dual basis to the basis of $N$ consisting of generators of $\sigma$ such that $\langle- m_\sigma,v_j\rangle=1$. Since $m_\sigma-m_{\sigma'}\in (\sigma\cap\sigma')^\perp$, we can define as in \cite[p. 69]{odabook}, the real $(\mathbb R^*)^n$-equivariant line bundle associated to $\{m_\sigma\}$ as follows : 
 $$\mathscr L_j=\bigcup_{\sigma\in\Sigma}U_\sigma\times\mathbb R/\sim \text{ where } (x,r)\sim(x,x(m_\sigma-m_{\sigma'})\cdot r) \text{ for } x\in U_\sigma\cap U_{\sigma'}$$ 
 Then $\{\mathscr L_j:1\leq j\leq d\}$ are the canonical real line bundles over $X_\Sigma$. Each $\mathscr L_j$ has an $(\mathbb R^*)^n$-equivariant sections $s_j$ such that the zero locus of $s_j$ is the $(\mathbb R^*)^n$-invariant co-dimension $1$ real subvariety $V(\rho_j)$ of $X_\Sigma$ (see \cite[Proposition 2.1]{odabook} and \cite[Example 2.10.5]{transgroupsmukherjee}).  Here $V(\rho_j)$ is the closure of the orbit ${O_{\rho_j}}=\operatorname{Hom}_{gp}(M\cap\rho_j^\perp,\mathbb R^*)$. Then, $w_1(\mathscr L_j)=[V(\rho_j)]\in H^1(X_\Sigma;\mathbb Z_2)$, where $[V(\rho_j)]$ is the Poincar\'{e} dual to the homology class of $V(\rho_j)$. Now by \cite[Theorem 4.3.1]{jurkiewiczthesis}, as a $\mathbb Z_2$-vector space, $H^1(X_\Sigma;\mathbb Z_2)$ is generated by $[V(\rho_j)]$ for $1\leq j\leq d$. Let $L$ be a real line bundle over $X_\Sigma$. Since $w_1(L)\in H^1(X_\Sigma;\mathbb Z_2)$ we have $w_1(L)=\sum_{j=1}^d c_j\cdot[V(\rho_j)]=w_1(\otimes \mathscr L_j^{c_j})$. Thus $L\simeq \otimes L_j^{c_j}$ (\cite[p. 250]{husemollerfibre}).
\end{proof}

\begin{remark}\label{ELBrem}
 Since $\Sigma$ is a smooth complete fan, we may assume without loss of generality that $\rho_1,\cdots,\rho_n$ span a cone in $\Sigma$. Then $\{v_1,\cdots,v_n\}$ is a basis of $N$ with dual basis $\{v^*_1,\cdots,v_n^*\}$ of $M$. From \cite[Theorem 4.3.1]{jurkiewiczthesis}, for $1\leq i\leq n$ we have the linear relations $\sum_{j=1}^d\langle v_i^*,v_j\rangle\cdot[V(\rho_j)]$ so that $H^1(X_\Sigma;\mathbb Z_2)=\langle[V(\rho_j)]:n+1\leq j\leq d\rangle\simeq \mathbb Z_2^{d-n}$. Therefore if $L$ is any real line bundle over $X_\Sigma$ then $L\simeq\bigotimes_{i=n+1}^d\mathscr L_i^{c_i}=:\mathscr L_h$. Here $\mathscr L_h$ is the line bundle corresponding to the support function given by $h(v_i)=-c_i$ for $n+1\leq i\leq d$ and $h(v_i)=0$ otherwise.
\end{remark}

\section{The fundamental group}\label{3}
We begin this section by giving a presentation of $\pi_1(B_k)$ with generators and relations. This is obtained by applying \cite[Proposition 3.1]{umafundamental}. Using the presentation we characterize those $B_k$ with abelian fundamental group. Again, using \cite[Theorem 6.1]{umafundamental} we show that $B_k$ is aspherical only when it is a real Bott manifold. In Section \ref{FGTP}, we derive further group theoretic properties of $\pi_1(B_k)$. In particular we prove that $\pi_1(B_k)$ is solvable and that $\pi_1(B_k)$ is nilpotent if and only if it is abelian.

\subsection{Presentation of $\pi_1(B_k)$}
Recall that we have an exact sequence 
\begin{equation}
 1\rightarrow\pi_1(B_k)\rightarrow W(\Sigma_k)\rightarrow\mathbb Z_2^n\rightarrow 1
\end{equation}
where $W(\Sigma_k)$ is the right angled Coxeter group associated to $\Sigma_k$ with the following presentation :
\begin{equation}\label{W}
\begin{split}
 W=W (\Sigma_k)& =\langle s_{i,l_i} : 1\leq i\leq k\,,\,0\leq l_i\leq n_i\,|\, s_{i,l_i}^2=1\,,\,(s_{i,l_i}\cdot s_{j,l_j^\prime})^2=1\text{ for }i\neq j\\
 ~ & \qquad\text{ and } (s_{i,l_i}\cdot s_{i,l_i^\prime})^2=1\ \forall\ i \text{ such that } n_i\geq2 \rangle.
 \end{split}
\end{equation}
The last arrow in the above exact sequence is obtained by composing the natural abelianization map from $W$ to $\mathbb Z_2^{n+k}$ with the characteristic map $\lambda$ from $\mathbb Z_2^{n+k}$ to $\mathbb Z_2^n$. (Here $\lambda$ is extended by linearity from Definition \ref{scdefn})

For each $1\leq j\leq k$ we define the reduced word 
\begin{equation}
 \alpha_j:=s_{j,0}\cdot s_{1,1}^{a_{j,1}^1}\cdot s_{1,2}^{a_{j,2}^1}\cdots s_{1,n_1}^{a_{j,n_1}^1}\cdots s_{k,1}^{a_{j,1}^k}\cdots s_{k,n_k}^{a_{j,n_k}^k}
\end{equation}
in $W$.

For every $\varepsilon=(\varepsilon_{1,1},\cdots,\varepsilon_{1,n_1},\cdots,\varepsilon_{k,1},\cdots,\varepsilon_{k,n_k})\in\mathbb Z_2^n$ we define the following :
\begin{enumerate}
 \item $t_\varepsilon=s_{1,1}^{\varepsilon_{1,1}}\cdots s_{1,n_1}^{\varepsilon_{1,n_1}}\cdots s_{k,1}^{\varepsilon_{k,1}}\cdots s_{k,n_k}^{\varepsilon_{k,n_k}}$.
 \item $B^p=(B_{1,1}^p,\cdots,B_{1,n_1}^p,\cdots,B_{k,1}^p,\cdots,B_{k,n_k}^p)$ where $B_{i,l_i}^p=\varepsilon_{i,l_i}+a_{p,l_i}^i$ for $1\leq i,p\leq k$ and $1\leq l_i\leq n_i$.
 \item $C^{p,q}=(C_{1,1}^{p,q},\cdots,C_{1,n_1}^{p,q},\cdots, C_{k,1}^{p,q},\cdots,C_{k,n_k}^{p,q})$ where $C_{i,l_i}^{p,q}=\varepsilon_{i,l_i}+a_{p,l_i}^i + a_{q,l_i}^i$ for $1\leq i,p,q\leq k$ and $1\leq l_i\leq n_i$.
\end{enumerate}
From the relations in $W$ we get, 
\begin{equation}\label{yepsilon}
 t_{\varepsilon}\cdot\alpha_j\cdot t_\varepsilon=\left\{\begin{array}{ccl}
                                                  \alpha_j & , & \varepsilon_{j,1}=0\text{ or }n_j\geq2\\
                                                  \alpha_j^{-1} & , & \text{otherwise} 
                                                 \end{array}\right.
\end{equation}
This can be seen as follows : if $n_j=2$  then $s_{j,0}$ commutes with all $s_{i,l_i}$ so that $t_{\varepsilon}\cdot\alpha_j\cdot t_\varepsilon=\alpha_j$. When $n_j=1$ we get
\begin{equation*}
 t_{\varepsilon}\cdot\alpha_j\cdot t_\varepsilon=s_{j,1}^{\varepsilon_{j,1}}\cdot s_{j,0}\cdot s_{1,1}^{a_{j,1}^1}\cdots s_{1,n_1}^{a_{j,n_1}^1}\cdots s_{j,1}^{a_{j,1}^j}\cdots s_{k,n_k}^{a_{j,n_k}^k}\cdot s_{j,1}^{\varepsilon_{j,1}}=\left\{\begin{array}{ccl}
                                                                                    \alpha_j & , & \varepsilon_{j,1}=0\\
                                                                                    \alpha_j^{-1} & , & \varepsilon_{j,1}=1 
                                                                                    \end{array}\right.
\end{equation*}
where $a_{j,1}^j=1$ . Note that $\alpha_j^2=1$ when $n_j\geq2$.

\begin{remark}\label{l}
 In view of Remark \ref{J} we may assume that there exists an $l\leq k$ such that $n_1,\cdots,n_l\geq2$ and $n_{l+1}=\cdots=n_k=1$. Let $M^l$ be the real part of the toric variety associated to the fan $\Sigma_l$. Note that if $l=0$ then $B_k$ is a real Bott manifold.
\end{remark}

\begin{theorem}\label{fungp}
 The fundamental group of the generalized real Bott manifold $B_k$ has a presentation $\pi_1(B_k)=\langle\,S\,|\,R\,\rangle$ with generators,
 \begin{equation}\label{gtrs}
  S=\{\alpha_j:1\leq j\leq k\}
 \end{equation}
 and relations,
 \begin{equation}\label{rels}
  \begin{split}
  R & =\{(\alpha_p\,\alpha_q)^2 : 1\leq p<q\leq l\}\bigcup\{\alpha_p^2:1\leq p\leq l\}\\
  ~ & \qquad\bigcup\{ x_{p,q} : l+1\leq p<q\leq k\}\bigcup\{x_{p,q}^\prime:1\leq p\leq l<q\leq k\}
  \end{split}
 \end{equation}
 where, \begin{equation*}
         x_{p,q}=\left\{\begin{array}{ccl}
                         \alpha_p\,\alpha_q\,\alpha_p^{-1}\,\alpha_q^{-1} & , & a_{p,1}^q=0\\
                         \alpha_p\,\alpha_q^{-1}\,\alpha_p^{-1}\,\alpha_q^{-1} & , & a_{p.1}^q=1
                        \end{array}\right.
        \end{equation*}
 and, \begin{equation*}
         x_{p,q}^\prime=\left\{\begin{array}{lcl}
                                     \alpha_p\,\alpha_q\,\alpha_p\,\alpha_q^{-1} & , & a_{p,1}^q=0\\
                                     (\alpha_p\,\alpha_q)^2 & , & a^q_{p,1}=1
                                    \end{array}\right.
        \end{equation*}
\end{theorem}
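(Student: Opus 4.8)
The plan is to identify $\pi_1(B_k)$ with the kernel of the surjection $W=W(\Sigma_k)\to\mathbb Z_2^n$ of the displayed exact sequence, and to compute a presentation of this kernel by a Reidemeister--Schreier rewriting, which is the content of \cite[Proposition 3.1]{umafundamental}. First I would take $T=\{t_\varepsilon:\varepsilon\in\mathbb Z_2^n\}$ as a transversal: since the generators $s_{i,l_i}$ with $l_i\ge 1$ pairwise commute, each $t_\varepsilon$ is a product of commuting involutions, $T$ is a Schreier transversal mapping bijectively onto $\mathbb Z_2^n$, and in fact $W=\pi_1(B_k)\rtimes T$. The Schreier generators are $\gamma_{t_\varepsilon,s}=t_\varepsilon\, s\,\overline{t_\varepsilon s}^{-1}$, where $\overline{w}$ denotes the transversal representative of the coset of $w$, and I would rewrite the defining relators of $W$ in (\ref{W}), conjugated by the elements of $T$, in terms of these.

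The first step is to cut down the generating set. Because every $s_{i,l_i}$ with $l_i\ge 1$ commutes with all letters occurring in any $t_\varepsilon$, one has $t_\varepsilon s_{i,l_i}=t_{\varepsilon+\mathbf e_{i,l_i}}\in T$, so the corresponding Schreier generators are trivial. For the letters $s_{i,0}$ a direct computation gives $\gamma_{t_\varepsilon,s_{i,0}}=t_\varepsilon\,\alpha_i\,t_\varepsilon$, which by (\ref{yepsilon}) equals $\alpha_i$ or $\alpha_i^{-1}$. Rewriting the relators $s_{i,0}^2$ and $(s_{i,0}s_{j,l_j'})^2$ for $j\ne i$, together with the relators $(s_{i,0}s_{i,l_i'})^2$ available when $n_i\ge 2$, shows that the ``moves'' $\mathbf a_i$ and $\mathbf e_{j,l_j'}$ span $\mathbb Z_2^n$, so every $\gamma_{t_\varepsilon,s_{i,0}}$ is identified with $\alpha_i^{\pm1}$; hence $\pi_1(B_k)=\langle\alpha_1,\dots,\alpha_k\rangle$. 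The same relators contribute exactly $\alpha_i^2=1$ when $n_i\ge 2$ (that is, $1\le i\le l$ in the notation of Remark \ref{l}) and nothing otherwise, accounting for the family $\{\alpha_p^2:1\le p\le l\}$.

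Next I would extract the remaining relations from the relators $(s_{p,0}s_{q,0})^2$ with $p<q$. Rewriting such a relator along the transversal produces the coset sequence $\varepsilon\to B^p\to C^{p,q}\to B^q\to\varepsilon$, so the associated relator in $\pi_1(B_k)$ is $\gamma_{t_\varepsilon,s_{p,0}}\,\gamma_{t_{B^p},s_{q,0}}\,\gamma_{t_{C^{p,q}},s_{p,0}}\,\gamma_{t_{B^q},s_{q,0}}$, which by (\ref{yepsilon}) is a word $\alpha_p^{\pm}\alpha_q^{\pm}\alpha_p^{\pm}\alpha_q^{\pm}$ whose signs are read off from the first coordinates, in blocks $p$ and $q$, of $\varepsilon,B^p,C^{p,q},B^q$. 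Taking $\varepsilon=0$ and using the normalized form (\ref{UTmatrix}) of $A$ (so $a^p_{p,1}=1$ and $a^p_{q,1}=0$ for $q>p$), these coordinates are $0,\ a^q_{p,1},\ 1,\ 1$, and a short case analysis on whether $n_p,n_q$ exceed $1$ and on the value of $a^q_{p,1}$ yields precisely $(\alpha_p\alpha_q)^2$ for $p<q\le l$, the relators $x_{p,q}$ for $l<p<q$, and $x_{p,q}'$ for $p\le l<q$; in the subcases with $a^q_{p,1}=1$ one rewrites the resulting $(\alpha_p\alpha_q^{-1})^2$ as $(\alpha_p\alpha_q)^2$ using $\alpha_p^2=1$.

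The hard part will be showing that the relators obtained for $\varepsilon\ne 0$ are consequences of the ones above. Here I would use that conjugation by $t_\varepsilon$ is an automorphism $\phi_\varepsilon$ of $\pi_1(B_k)$ acting on generators by $\alpha_i\mapsto\alpha_i^{\pm1}$ according to (\ref{yepsilon}), and that the relator at level $\varepsilon$ is exactly $\phi_\varepsilon$ applied to the relator at level $0$. It then suffices to verify, in the free group on the $\alpha_i$, that each $\phi_\varepsilon$ carries the normal closure of $\{(\alpha_p\alpha_q)^2,\ x_{p,q},\ x_{p,q}',\ \alpha_p^2\}$ into itself. For $p,q\le l$ the relevant generators are fixed by every $\phi_\varepsilon$; in the mixed and the all-$>l$ cases one checks directly that $\phi_\varepsilon(x_{p,q}')$ and $\phi_\varepsilon(x_{p,q})$ reduce to listed relators modulo $\{\alpha_p^2:p\le l\}$ --- for instance, when $a^q_{p,1}=1$ the relation $x_{p,q}$ says that conjugation by $\alpha_p$ inverts $\alpha_q$, and then replacing $\alpha_p,\alpha_q$ by $\alpha_p^{\pm},\alpha_q^{\pm}$ leaves the commutator-type relator trivial. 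Once this redundancy is established, Tietze transformations remove the superfluous generators and relators, leaving exactly the presentation $\langle S\mid R\rangle$ of (\ref{gtrs})--(\ref{rels}).
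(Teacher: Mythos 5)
Your proposal is correct and takes essentially the same route as the paper: the paper's appeal to \cite[Theorem 3.2]{umafundamental} packages exactly the Reidemeister--Schreier computation you perform by hand (its generators $y_{j,\varepsilon}$ are your Schreier generators $t_\varepsilon\,\alpha_j\,t_\varepsilon$, and its relators are your rewritten words along the coset sequence $\varepsilon\to B^p\to C^{p,q}\to B^q\to\varepsilon$), after which both arguments run the identical case analysis on $p,q\lessgtr l$ and $a_{p,1}^q$. The only difference is that you open up the cited black box and make explicit, via the automorphisms $\phi_\varepsilon$ and a normal-closure verification, the redundancy of the $\varepsilon\neq0$ relators, which the paper compresses into the assertion that each $X_{p,q}^\varepsilon$ is conjugate to $x_{p,q}^{\pm1}$ or $(x_{p,q}^\prime)^{\pm1}$.
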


\begin{proof}
 The proof uses \cite[Theorem 3.2]{umafundamental}. We can relate the above notations with those in the proof of \cite[Theorem 3.2]{umafundamental} as follows : $$y_{j,\varepsilon}=t_\varepsilon\cdot\alpha_j\cdot t_\varepsilon$$ for $1\leq j\leq k$ and $\varepsilon\in\mathbb Z_2^n$. Thus $y_{j,\varepsilon}=\alpha_j^{\pm1}$ by (\ref{yepsilon}). Then it can be seen that 
 \begin{equation*}
 \begin{split}
  y_{p,\varepsilon}\cdot y_{q,B^p}\cdot y_{p,C^{p,q}}\cdot y_{q,B^q} & =t_\varepsilon \cdot\alpha_p\cdot t_{\mathbf a_p}\alpha_q t_{\mathbf a_p}\cdot t_{\mathbf a_q+\mathbf a_p}\alpha_p t_{\mathbf a_q+\mathbf a_p}\cdot t_{\mathbf a_q}\alpha_qt_{\mathbf a_q}\cdot t_{\varepsilon}\\
  ~ & = : X^\varepsilon_{p,q}
  \end{split}
 \end{equation*}
 whenever $1\leq p,q\leq k$. By exchanging the rolls of $p$ and $q$ we get that $X_{p,q}^\varepsilon$ is conjugate to $X_{q,p}^\varepsilon$ or $(X_{q,p}^\varepsilon)^{-1}$ by an element from the free group generated by $S$. Thus we may assume that $p<q$. We now look at the following cases :
 \begin{enumerate}
  \item If $p,q\leq l$ then $X_{p,q}^\varepsilon=t_\varepsilon\cdot\alpha_p\,\alpha_q\,\alpha_p\,\alpha_q\cdot t_\varepsilon =(\alpha_p\,\alpha_q)^2$ .
  \item If $p\leq l$ and $q>l$ then
  \begin{equation*}
  X_{p,q}^\varepsilon=\left\{\begin{array}{lcl}
                                     t_\varepsilon\cdot\alpha_p\,\alpha_q\,\alpha_p\,\alpha_q^{-1}\cdot t_\varepsilon & , & a_{p,1}^q=0\\
                                     t_\varepsilon\cdot(\alpha_p\,\alpha_q^{-1})^2\cdot t_\varepsilon & , & a_{p,1}^q=1
                                    \end{array}\right.
  \end{equation*}
  \item If $p,q> l$ then
  \begin{equation*}
   X_{p,q}^\varepsilon=\left\{\begin{array}{ccl}
                             t_\varepsilon\cdot\alpha_p\,\alpha_q\,\alpha_p^{-1}\,\alpha_q^{-1}\cdot t_\varepsilon & , & a_{p,1}^q=0\\
                             t_\varepsilon\cdot\alpha_p\,\alpha_q^{-1}\,\alpha_p^{-1}\,\alpha_q^{-1}\cdot t_\varepsilon & , & a_{p,1}^q=1
                            \end{array}\right.
  \end{equation*}
  \end{enumerate}
  From $(\ref{yepsilon})$ it is easy to see that $X_{p,q}^\varepsilon$ in (2) is conjugate to either $x_{p,q}^\prime$ or $(x_{p,q}^\prime)^{-1}$ and $X_{p,q}^\varepsilon$ in (3) is conjugate to either $x_{p,q}$ or $x_{p,q}^{-1}$ by an element from the free group generated by $S$. Also by definition of $\alpha_j$ it is clear that $\alpha_j^2=1$ for $1\leq j\leq l$. Thus it follows that $R$ gives a complete set of relations of $\pi_1(B_k)$. Hence the theorem.
\end{proof}

\begin{remark}
 Alternately, the presentation for $\pi_1(B_k)$ can be derived from (\ref{ses}).
\end{remark}

\begin{corollary}\label{abelian}
 The fundamental group $\pi_1(B_k)$ is abelian if and only if $B_k\simeq M^l\times(S^1)^{k-l}$ for some $0\leq l\leq k$.
\end{corollary}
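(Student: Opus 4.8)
The plan is to prove the three-way equivalence between (a) $\pi_1(B_k)$ being abelian, (b) the vanishing of every twisting coefficient $a_{p,1}^q$ with $p<q$ and $q>l$ (here $l$ is as in Remark \ref{l}), and (c) the splitting $B_k\simeq M^l\times(S^1)^{k-l}$, cycling through (a)$\Rightarrow$(b)$\Rightarrow$(c)$\Rightarrow$(a). The direction (c)$\Rightarrow$(a) is immediate: a product gives $\pi_1(B_k)\cong\pi_1(M^l)\times\mathbb Z^{k-l}$, and $M^l$ is the generalized real Bott manifold of the first $l$ stages, so by Theorem \ref{fungp} its fundamental group is generated by $\alpha_1,\dots,\alpha_l$ subject only to $\alpha_p^2=1$ and $(\alpha_p\alpha_q)^2=1$. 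Combining these gives $\alpha_p\alpha_q=(\alpha_p\alpha_q)^{-1}=\alpha_q\alpha_p$, so $\pi_1(M^l)\cong\mathbb Z_2^l$ is abelian. I would record here the useful observation that these generalized-block relations \emph{never} obstruct commutativity, so the only possible obstruction in the presentation (\ref{rels}) comes from the relations $x_{p,q}$ ($l<p<q$) and $x'_{p,q}$ ($p\le l<q$) in the case $a_{p,1}^q=1$.

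For (a)$\Rightarrow$(b) I would argue by contraposition: suppose $a_{p_0,1}^{q_0}=1$ for some $p_0<q_0$ with $q_0>l$. In both cases $p_0\le l$ and $p_0>l$, the corresponding relation of (\ref{rels}), together with $\alpha_{p_0}^2=1$ when $p_0\le l$, rearranges to the single conjugation relation $\alpha_{p_0}\,\alpha_{q_0}\,\alpha_{p_0}^{-1}=\alpha_{q_0}^{-1}$. Were $\pi_1(B_k)$ abelian this would force $\alpha_{q_0}^2=1$, so it suffices to show that $\alpha_{q_0}$ has infinite order. This is the crux of the argument, and I would establish it using the inclusion $\pi_1(B_k)\hookrightarrow W(\Sigma_k)$ coming from the exact sequence together with the homomorphism $\varphi$ from $W(\Sigma_k)$ onto the infinite dihedral group that sends $s_{q_0,0},s_{q_0,1}$ to the two standard involutions and every other generator $s_{i,l_i}$ to the identity. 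This $\varphi$ is well defined precisely because $n_{q_0}=1$, so $W(\Sigma_k)$ contains no relation $(s_{q_0,0}\,s_{q_0,1})^2=1$ (one checks that all relations of (\ref{W}) survive, since for $i\neq j$ at most one of $s_{i,l_i},s_{j,l_j'}$ maps nontrivially). Because the upper-triangular form (\ref{UTmatrix}) gives $\alpha_{q_0}=s_{q_0,0}\,s_{q_0,1}\cdot c$ with $c$ a word in the $s_{i,l_i}$ for $i>q_0$, we get $\varphi(\alpha_{q_0})$ equal to the product of the two dihedral generators, which has infinite order; hence so does $\alpha_{q_0}$, and the desired contradiction follows.

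Finally, for (b)$\Rightarrow$(c) I would feed the vanishing back into the matrix: condition (b) forces the block above the diagonal linking the rows $p\le l$ to the columns $q>l$ to vanish and forces the lower-right $(k-l)\times(k-l)$ block to be the identity, so $A$ becomes block diagonal $\mathrm{diag}(A_{11},I)$. Reading off the action (\ref{action}) of $G=\mathbb Z_2^k$ on $X=\prod_i S^{\,n_i}$, the factor $g_i$ with $i\le l$ acts only on $\prod_{i\le l}S^{\,n_i}$ via $A_{11}$, while each $g_q$ with $q>l$ acts antipodally on its own factor $S^1$ and trivially elsewhere. Thus the action splits as a direct product, and passing to quotients (Remark \ref{QC}, Proposition \ref{scisgbt}) yields $B_k\cong M^l\times(\mathbb{RP}^1)^{k-l}=M^l\times(S^1)^{k-l}$.

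The main obstacle is the forward direction, namely showing that even a single twisting coefficient $a_{p_0,1}^{q_0}=1$ genuinely destroys commutativity. The abelianization does not detect this, since the relation $x_{p_0,q_0}$ or $x'_{p_0,q_0}$ already kills $2\alpha_{q_0}$ in $H_1(B_k;\mathbb Z)$; one must therefore work in the full group, and the infinite-dihedral quotient of $W(\Sigma_k)$ is what I expect to make this precise.
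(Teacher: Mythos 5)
Your proof is correct, and its skeleton is the same as the paper's: via the presentation of Theorem \ref{fungp}, abelianness of $\pi_1(B_k)$ is equivalent to the vanishing of all $a_{p,1}^q$ with $p<q$ and $q>l$, and that vanishing is then identified with the splitting $B_k\simeq M^l\times(S^1)^{k-l}$. The difference is one of completeness, and it lands exactly where the real content is. The paper's proof is two lines: it asserts the coefficient characterization is ``easy to see'' from Theorem \ref{fungp} and concludes. But the implication (abelian $\Rightarrow$ all $a_{p,1}^q=0$) is not formal: as you note, a relation $\alpha_{p_0}\alpha_{q_0}\alpha_{p_0}^{-1}=\alpha_{q_0}^{-1}$ in a presentation does not by itself preclude the presented group from being abelian, so one must show $\alpha_{q_0}^2\neq 1$, i.e.\ that $\alpha_{q_0}$ has infinite order. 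You supply this with the quotient $\varphi:W(\Sigma_k)\rightarrow D_\infty$ killing every generator except $s_{q_0,0},s_{q_0,1}$, checking it respects all relations in (\ref{W}) precisely because $n_{q_0}=1$; this is a valid, self-contained argument. The paper never gives this step in the corollary's proof; the needed fact appears only later, in the proof of Proposition \ref{nil}, where $\alpha_q^2=(s_{q,0}\cdot s_{q,1})^2\neq 1$ is deduced by invoking the (standard Coxeter-theoretic) infinite order of $s_{q,0}\cdot s_{q,1}$ in $W$ — your $D_\infty$ homomorphism is in effect a proof of that cited fact. Your (b)$\Rightarrow$(c) step (block-diagonal $A$, product action via Remark \ref{QC}) and (c)$\Rightarrow$(a) step ($\pi_1(M^l)\cong\mathbb Z_2^l$ from Theorem \ref{fungp} applied to the first $l$ stages) are both correct and make explicit what the paper compresses into ``hence the corollary follows.''
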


\begin{proof}
 From Theorem \ref{fungp} it is easy to see that $\pi_1(B_k)$ is abelian if and only if for all $1\leq p\leq k$ , $l+1\leq q\leq k$ and $p<q$ , $a_{p,1}^q=0$, where $l$ is as in Remark \ref{l}. Hence the corollary follows.
\end{proof}

\begin{lemma}\label{aspherical}
 $B_k$ is an aspherical manifold if and only if it is a real Bott manifold.
\end{lemma}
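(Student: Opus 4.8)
The plan is to exploit the quotient description $B_k = X_k/G_k$ from Remark~\ref{QC}, in which $X_k = \prod_{i=1}^k S^{\,n_i}$ and $G_k = \mathbb Z_2^k$. Because $M(A)=B_k$ is a small cover, the linear-independence condition imposed on the characteristic function $\lambda$ in Definition~\ref{scdefn} forces the $G_k$-action on $X_k$ to be free; and since $G_k$ is finite, the quotient map $q\colon X_k \to B_k$ is then a covering of degree $2^k$. A covering map induces isomorphisms $\pi_i(X_k) \cong \pi_i(B_k)$ for every $i \geq 2$, so $B_k$ is aspherical if and only if its finite cover $X_k$ is aspherical. This reduces the entire question to the product of spheres $X_k$, which is transparent to analyze.

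I would then compute the higher homotopy of $X_k$ directly. Since homotopy groups commute with finite products, $\pi_i(X_k) \cong \bigoplus_{j=1}^k \pi_i(S^{\,n_j})$ for all $i$. A factor with $n_j = 1$ contributes nothing in degrees $\geq 2$, because $\pi_i(S^1)=0$ for $i \geq 2$, whereas a factor with $n_j \geq 2$ contributes $\pi_{n_j}(S^{\,n_j}) \cong \mathbb Z \neq 0$ in degree $n_j$. Hence $\pi_i(X_k) = 0$ for all $i \geq 2$ if and only if every $n_j$ equals $1$, that is, if and only if $X_k = (S^1)^k$ is the $k$-torus.

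Assembling the two directions finishes the proof. If $B_k$ is a real Bott manifold then all $n_j = 1$, so $X_k = (S^1)^k$ is aspherical and therefore so is $B_k$ by the covering isomorphism. Conversely, if $B_k$ is not a real Bott manifold then, in the notation of Remark~\ref{l}, we have $l \geq 1$ and some $n_j \geq 2$; then $\pi_{n_j}(B_k) \cong \pi_{n_j}(X_k) \neq 0$, so $B_k$ is not aspherical. This establishes the stated equivalence.

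There is no serious obstacle once the covering $q\colon X_k \to B_k$ is recognized. The only points needing care are the freeness of the $G_k$-action, which is built into the small-cover structure through the condition on $\lambda$, and the standard fact that a covering map is an isomorphism on $\pi_i$ for $i \geq 2$. I remark that this covering-space route is more elementary than deducing asphericity from a general homotopy-group computation such as \cite[Theorem~6.1]{umafundamental}, since the product-of-spheres total space makes the criterion immediate.
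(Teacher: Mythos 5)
Your proposal is correct, but it takes a genuinely different route from the paper. The paper's proof stays inside the real toric variety framework: it invokes \cite[Theorem 6.1]{umafundamental}, which says a real toric variety is aspherical if and only if its fan is flag-like, and then checks flag-likeness of $\Sigma_k$ combinatorially --- if some $n_i\geq 2$, the edges $u^i_0,\cdots,u^i_{n_i}$ pairwise span cones in $\Sigma_k$ but do not jointly span one, while if all $n_i=1$ the fan is flag-like. You instead use the quotient description of Remark \ref{QC}: the action of $G_k=\mathbb Z_2^k$ on $X_k=\prod_{i=1}^k S^{\,n_i}$ is free (note you do not even need to re-derive this from the condition on $\lambda$; freeness is exactly what the quotient construction in Remark \ref{QC} asserts), so $X_k\to B_k$ is a finite covering and induces isomorphisms on $\pi_i$ for $i\geq 2$; asphericity of $B_k$ then reduces to asphericity of a product of spheres, which holds precisely when every factor is a circle, i.e.\ when $l=0$ in the notation of Remark \ref{l}. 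Both arguments are complete. Yours is more elementary and self-contained, using only standard covering-space theory, and it yields more: it computes $\pi_j(B_k)\cong\bigoplus_{i=1}^k\pi_j(S^{\,n_i})$ for all $j\geq 2$, which is exactly the paper's later theorem that $\pi_j(B_k)\cong\pi_j(\mathbb P^{n_1})\times\cdots\times\pi_j(\mathbb P^{n_k})$ (there proved separately via the fibration and the split exact sequence (\ref{ses})), so the lemma and that theorem would follow from a single argument. What the paper's route buys in exchange is uniformity: the flag-like criterion is part of the same machinery from \cite{umafundamental} (Coxeter group presentations, fan combinatorics) that drives the rest of Section \ref{3}, and the combinatorial content of its verification is the same dichotomy on the $n_i$ that appears in your proof, transplanted to the fan.
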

\begin{proof}
 We know from \cite[Theorem 6.1]{umafundamental} that a real toric variety is aspherical if and only if the corresponding fan is flag-like. A fan is flag-like if and only if for every collection $S$ of edge vectors of the fan if $\langle u,u^\prime\rangle$ spans a cone for all $u,u^\prime\in S$ then $S$ spans a cone. Now if $n_i\geq2$ for some $i$ then the set of edge vectors $\{ u^i_{l_i}\}_{l_i=0}^{n_i}$ of $\Sigma_k$ is such that $\langle u^i_{l_i},  u^i_{l_i^\prime}\rangle$ spans a cone in $\Sigma_k$ but $\langle u^i_0,\cdots,u^i_{n_i}\rangle$ does not. Conversely, if $n_i=1$ for all $i$ then it can be easily seen that $\Sigma_k$ is flag-like. The theorem follows. 
\end{proof}

\begin{corollary}
 The group $\pi_1(B_k)$ is torsion free if and only if $B_k$ is a real Bott manifold.
\end{corollary}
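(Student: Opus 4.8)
The plan is to prove the two implications separately, using Lemma \ref{aspherical} for one direction and the explicit presentation of Theorem \ref{fungp} for the other.

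For the ``if'' direction, suppose $B_k$ is a real Bott manifold. By Lemma \ref{aspherical} it is then aspherical, so, being a closed manifold, it is a finite-dimensional $K(\pi_1(B_k),1)$. Hence $\pi_1(B_k)$ has finite cohomological dimension. Since any nontrivial finite group---in particular $\mathbb Z_2$---has infinite cohomological dimension, $\pi_1(B_k)$ can contain no element of finite order; this is the standard fact that the fundamental group of a closed aspherical manifold is torsion free.

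For the ``only if'' direction I would argue by contrapositive: assuming $B_k$ is not a real Bott manifold, I will exhibit an element of order exactly two. By Remark \ref{l} we may take $n_1\geq 2$, so that $l\geq 1$ and the presentation in Theorem \ref{fungp} contains the relation $\alpha_1^2=1$. It then remains only to show $\alpha_1\neq 1$. To this end I would define a homomorphism $\phi\colon\pi_1(B_k)\to\mathbb Z_2$ by $\phi(\alpha_1)=1$ and $\phi(\alpha_j)=0$ for $j\neq 1$, and check directly that $\phi$ annihilates every relation in $R$: each word $(\alpha_p\alpha_q)^2$ and $\alpha_p^2$ maps to an even multiple and hence to $0$, the word $x_{p,q}$ does not involve $\alpha_1$ at all (since $p,q>l\geq 1$), and $x'_{p,q}$ involves $\alpha_1$ only with even total exponent, so it too maps to $0$. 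Thus $\phi$ is well defined, and $\phi(\alpha_1)=1\neq 0$ forces $\alpha_1\neq 1$; combined with $\alpha_1^2=1$ this gives an element of order two, so $\pi_1(B_k)$ has torsion.

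The main obstacle is precisely this last verification. The relation $\alpha_1^2=1$ by itself only shows that $\alpha_1$ has order dividing two, so the substantive point is to rule out $\alpha_1=1$, which the surjection $\phi$ onto $\mathbb Z_2$ accomplishes; confirming that $\phi$ respects the defining relations is a routine case analysis over the four families in (\ref{rels}). Geometrically, $\phi$ is the map on fundamental groups induced by the projection of the tower onto its first stage $B_1=\mathbb{RP}^{n_1}$, whose fundamental group is $\mathbb Z_2$ precisely because $n_1\geq 2$, which is the conceptual reason a factor of genuine torsion must appear as soon as $B_k$ fails to be a real Bott manifold.
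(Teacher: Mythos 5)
Your proof is correct and follows essentially the same route as the paper: asphericity (Lemma \ref{aspherical}) plus the standard torsion-freeness of fundamental groups of closed aspherical manifolds for one direction, and the relation $\alpha_i^2=1$ with $\alpha_i\neq 1$ from the presentation in Theorem \ref{fungp} for the other. Your explicit surjection $\phi\colon\pi_1(B_k)\to\mathbb Z_2$ verifying $\alpha_1\neq 1$ is a welcome extra detail---the paper simply asserts nontriviality from Theorem \ref{fungp} (it also follows from the abelianization computed in Proposition \ref{btfirsthomology})---but it does not change the substance of the argument.
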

\begin{proof}
 If $B_k$ is a real Bott manifold then by Lemma \ref{aspherical} it is aspherical and hence $\pi_1(B_k)$ is torsion free. Conversely, if $n_i\neq1$ for some $i$, then from Theorem \ref{fungp}, $\alpha_i\neq1$ but $\alpha_i^2=1$, so that $\alpha_i$ is a torsion element.
\end{proof}

\subsection{Further group theoretic properties of $\pi_1(B_k)$}\label{FGTP}

\begin{proposition}\label{sol}
 The commutator subgroup $[\pi_1(B_k),\pi_1(B_k)]$ is abelian. In particular $\pi_1(B_k)$ is a solvable group.
\end{proposition}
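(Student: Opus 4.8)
The plan is to read the commutator structure directly off the presentation in Theorem~\ref{fungp} and to show that the commutator subgroup is an (abelian) product of pairwise-commuting cyclic groups. Write $G=\pi_1(B_k)$ and $S=\{\alpha_1,\dots,\alpha_k\}$.

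First I would rewrite every relation as a conjugation rule. For $1\le p<q\le l$ the relations $\alpha_p^2=1$, $\alpha_q^2=1$ and $(\alpha_p\alpha_q)^2=1$ force $\alpha_p$ and $\alpha_q$ to commute. For $p<q$ with $q>l$ (whether or not $p\le l$), the relations $x_{p,q}$ and $x_{p,q}'$, together with $\alpha_p^2=1$ in the case $p\le l$, all collapse to the single uniform rule $\alpha_p\,\alpha_q\,\alpha_p^{-1}=\alpha_q^{\varepsilon_{p,q}}$ with $\varepsilon_{p,q}=(-1)^{a_{p,1}^q}\in\{\pm1\}$. Hence, for every $p<q$, $[\alpha_p,\alpha_q]=\alpha_q^{\varepsilon_{p,q}-1}\in\{1,\alpha_q^{-2}\}$, so each commutator of generators lies in the cyclic group $\langle\alpha_q^2\rangle$ (and the value is trivial automatically when $q\le l$, where $\alpha_q^2=1$).

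The key lemma I would prove next is that $\alpha_q^2$ is central in $N_q:=\langle\alpha_q,\dots,\alpha_k\rangle$. Indeed, for $q<p$ the rule reads $\alpha_q\,\alpha_p\,\alpha_q^{-1}=\alpha_p^{\delta}$ with $\delta=\pm1$, and conjugating twice gives $\alpha_q^2\,\alpha_p\,\alpha_q^{-2}=\alpha_p^{\delta^2}=\alpha_p$; thus $\alpha_q^2$ commutes with every $\alpha_p$, $p>q$. Moreover conjugation by any $\alpha_p$ with $p<q$ sends $\alpha_q^2$ to $\alpha_q^{\pm2}$, so each $\langle\alpha_q^2\rangle$ is a normal subgroup of $G$. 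Assembling these facts: since $[G,G]$ is the normal closure of the commutators $[\alpha_p,\alpha_q]$ and each such commutator lies in the normal subgroup $\langle\alpha_q^2\rangle$, we obtain $[G,G]=\prod_{q}\langle\alpha_q^2\rangle$, the product being over those $q>l$ for which some $\varepsilon_{p,q}=-1$. Finally, for $q<q'$ the generator $\alpha_q^2$ commutes with $\alpha_{q'}$ by centrality in $N_q$, hence with $\alpha_{q'}^2$; so the cyclic generators of $[G,G]$ pairwise commute and $[G,G]$ is abelian. Solvability of $\pi_1(B_k)$ follows at once.

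The main obstacle is purely bookkeeping rather than conceptual: the conjugation rule is furnished by the presentation only for the ordered pair $p<q$, so the centrality statement requires deriving the $p>q$ action and verifying that the sign cancels upon squaring, and one must check that the reduction $[\alpha_p,\alpha_q]\in\langle\alpha_q^2\rangle$ holds uniformly across the three index regimes $p<q\le l$, $\;p\le l<q$, and $l<p<q$.
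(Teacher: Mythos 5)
Your proof is correct, but it takes a genuinely different route from the paper's. The paper does not touch the presentation of Theorem \ref{fungp} at this point: it uses the embedding of $\pi_1(B_k)$ into the right-angled Coxeter group $W(\Sigma_k)$ coming from the exact sequence $1\to\pi_1(B_k)\to W(\Sigma_k)\to\mathbb Z_2^n\to1$, invokes \cite[Lemma 4.1]{umafundamental} (which says $[W,W]$ is abelian if and only if for each edge $u$ of the fan there is at most one edge $u'$ such that $\langle u,u'\rangle$ does not span a cone), verifies this combinatorial condition for $\Sigma_k$ --- the only non-cone pairs are $\{u^i_0,u^i_1\}$ when $n_i=1$ --- and concludes that $[W,W]$ is abelian, hence so is its subgroup $[\pi_1(B_k),\pi_1(B_k)]$. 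You instead extract everything from the presentation itself: the uniform conjugation rule $\alpha_p\alpha_q\alpha_p^{-1}=\alpha_q^{\pm1}$ for $p<q$ (your case check across the three regimes is right, including the use of $\alpha_p^{-1}=\alpha_p$ when $p\le l$), the cancellation of the sign upon squaring so that $\alpha_q^2$ commutes with every $\alpha_p$ with $p>q$, normality of each $\langle\alpha_q^2\rangle$, and the identification of $[G,G]$ with a product of pairwise-commuting cyclic normal subgroups; note that the inclusion $[G,G]\subseteq\prod_q\langle\alpha_q^2\rangle$ alone already suffices, since a subgroup of a group generated by pairwise commuting elements is abelian. The trade-off: the paper's argument is shorter, is independent of Theorem \ref{fungp}, and proves the stronger statement that the full Coxeter commutator subgroup $[W,W]$ is abelian; yours is self-contained given Theorem \ref{fungp} and yields finer structural output --- an explicit generating set $\{\alpha_q^2\}$ for $[\pi_1(B_k),\pi_1(B_k)]$ --- which is essentially the information the paper re-derives separately in Propositions \ref{btfirsthomology} and \ref{nil} (e.g.\ that $\alpha_q^2\in[\pi_1(B_k),\pi_1(B_k)]$ whenever $a_{p,1}^q=1$ for some $p<q$).
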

\begin{proof}
By \cite[Lemma 4.1]{umafundamental} we know that $[W,W]$ is abelian if and only if for each edge $u$ in the fan $\Sigma_k$, there is at most one edge $u^\prime$ such that $\langle u,u^\prime\rangle$ does not span a cone in $\Sigma_k$. We have $\langle u^i_{l_i},u^j_{l_j^\prime}\rangle$ spans a cone in $\Sigma_k$ whenever $i\neq j$ or $i=j$ , $n_i\geq2$. When $i=j$ and $n_i=1$ there are only two edge vectors $u^i_0$ and $u^i_1$ and they do not span a cone in $\Sigma_k$. Hence $[W,W]$ is abelian. Since $\pi_1(B_k)$ is a subgroup of $W$ it follows that $[\pi_1(B_k),\pi_1(B_k)]$ is abelian. Finally, $1\leq[\pi_1,\pi_1]\leq\pi_1(B_k)$ gives an abelian tower for $\pi_1(B_k)$ so that $\pi_1(B_k)$ is solvable.
\end{proof}

Let $\bar{\alpha_j}$ denote the image of $\alpha_j$ under the canonical abelianization homomorphism 
\begin{equation}\label{H1} \pi_{1}(B_k)\rightarrow H_1(B_k;\mathbb Z)\simeq\pi_{1}(B_k)/[\pi_1(B_k),\pi_1(B_k)]. \end{equation}

We then have the following description of $H_{1}(B_k;\mathbb Z)$.

\begin{proposition}\label{btfirsthomology} The group $H_1(B_k;\mathbb Z)$ has a presentation with generators 
 \begin{equation}\label{gens}
 \langle \bar{\alpha_j} : 1\leq j\leq k \rangle
 \end{equation} 
 and relations as follows :
\begin{equation}\label{rel1}
\bar{\alpha_p}\cdot\bar{\alpha_q}\cdot\bar{\alpha_p}^{-1}\cdot\bar{\alpha_q}^{-1}
\end{equation}
for $1\leq p,q\leq k$ and 
\begin{equation}\label{rel2} 
\bar{\alpha_q}^2
\end{equation}
for $1\leq q\leq l$ and for those $l<q\leq k$ for which there exists a $p<q$ such that $a_{p,1}^q=1$. Thus additively we have an isomorphism $H_{1}(B_k;\mathbb Z)\simeq \mathbb Z^{k-r-l}\bigoplus \mathbb Z_2^{l+r}$ where $r$ is the number of $l<q\leq k$ as above. 
\end{proposition}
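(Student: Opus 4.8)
The plan is to use the standard identification, recorded in \eqref{H1}, of $H_1(B_k;\mathbb Z)$ with the abelianization $\pi_1(B_k)/[\pi_1(B_k),\pi_1(B_k)]$, and then to abelianize the finite presentation $\langle S\mid R\rangle$ produced in Theorem \ref{fungp}. Passing to the abelianization amounts to adjoining all commutators $[\bar{\alpha_p},\bar{\alpha_q}]$, which is exactly the family \eqref{rel1}, and then rewriting each relation of $R$ modulo these commutators. I would therefore go through the four families listed in \eqref{rels} one at a time and record what each one contributes once the generators are permitted to commute.

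First I would dispose of the relations that become redundant. In the abelianization the relation $(\alpha_p\,\alpha_q)^2$ with $1\le p<q\le l$ becomes $\bar{\alpha_p}^2\,\bar{\alpha_q}^2$, which adds nothing once $\bar{\alpha_p}^2$ and $\bar{\alpha_q}^2$ are already imposed for indices $\le l$. Likewise, in the case $a_{p,1}^q=0$ the relation $x_{p,q}=\alpha_p\,\alpha_q\,\alpha_p^{-1}\,\alpha_q^{-1}$ (for $l<p<q$) becomes trivial, being a pure commutator, and $x_{p,q}'=\alpha_p\,\alpha_q\,\alpha_p\,\alpha_q^{-1}$ (for $p\le l<q$) abelianizes to $\bar{\alpha_p}^2$, already imposed since $p\le l$. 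Thus none of the $a_{p,1}^q=0$ cases, and none of the relations with both indices $\le l$, survive beyond \eqref{rel1}.

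Next I would extract the surviving torsion relations. The relation $\alpha_p^2$ for $1\le p\le l$ gives $\bar{\alpha_q}^2$ for $1\le q\le l$ directly. When $a_{p,1}^q=1$, the relation $x_{p,q}=\alpha_p\,\alpha_q^{-1}\,\alpha_p^{-1}\,\alpha_q^{-1}$ (with $l<p<q$) abelianizes to $\bar{\alpha_q}^{-2}$, while $x_{p,q}'=(\alpha_p\,\alpha_q)^2$ (with $p\le l<q$) abelianizes to $\bar{\alpha_p}^2\,\bar{\alpha_q}^2$, which reduces to $\bar{\alpha_q}^2$ using $\bar{\alpha_p}^2=1$. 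In both situations a relation $\bar{\alpha_q}^2$ appears for an index $q>l$ precisely when some $p<q$ satisfies $a_{p,1}^q=1$, which is exactly the condition isolating the indices enumerated by $r$; hence the relations \eqref{rel2} are as stated, and the presentation of $H_1(B_k;\mathbb Z)$ is the one given in \eqref{gens}, \eqref{rel1}, \eqref{rel2}.

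Finally I would read off the structure. The abelianized presentation has the $k$ commuting generators $\bar{\alpha_1},\dots,\bar{\alpha_k}$, among which $\bar{\alpha_q}$ is forced to have order dividing $2$ for exactly the $l$ indices $q\le l$ together with the $r$ indices $l<q\le k$ admitting some $p<q$ with $a_{p,1}^q=1$, while the remaining $k-l-r$ generators are unconstrained. This yields $H_1(B_k;\mathbb Z)\cong\mathbb Z^{\,k-r-l}\oplus\mathbb Z_2^{\,l+r}$. The only delicate point is the bookkeeping in the middle steps, namely verifying that every relation of $R$ collapses to a commutator, a redundant square, or one of the squares counted in \eqref{rel2}; this is a direct computation and presents no essential obstacle.
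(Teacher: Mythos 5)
Your proposal is correct and is exactly the argument the paper intends: the paper's own proof simply says the result "follows readily from (\ref{H1}) and (\ref{rels})," i.e.\ abelianize the presentation of Theorem \ref{fungp}, which is precisely the case-by-case bookkeeping you carry out. Your version just makes explicit the verification that each relation in $R$ collapses to a commutator, a redundant square, or one of the squares $\bar{\alpha_q}^2$ counted by $l+r$.
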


\begin{proof}
 The proof follows readily from (\ref{H1}) and (\ref{rels}).
\end{proof}
%
 
\begin{proposition}\label{nil}
 The group $\pi_1(B_k)$ is nilpotent if and only if it is abelian. 
 \end{proposition}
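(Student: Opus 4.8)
The plan is to establish the nontrivial direction: if $\pi_1(B_k)$ is nilpotent then it is abelian (the converse is immediate, since abelian groups are nilpotent). The natural strategy is to locate, inside a non-abelian $\pi_1(B_k)$, a subgroup or a pair of elements witnessing non-nilpotency, by exploiting the explicit presentation from Theorem \ref{fungp} together with the abelianness characterization in Corollary \ref{abelian}. By Corollary \ref{abelian}, if $\pi_1(B_k)$ is \emph{not} abelian then there exist indices $1\leq p<q\leq k$ with $l+1\leq q$ and $a_{p,1}^q=1$; I would fix such a pair and show that the subgroup $\langle \alpha_p,\alpha_q\rangle$ generated by the two corresponding generators fails to be nilpotent, which forces the whole group to fail too (since a subgroup of a nilpotent group is nilpotent).

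First I would read off, from the relations \eqref{rels}, exactly how $\alpha_p$ and $\alpha_q$ interact when $a_{p,1}^q=1$. When $p>l$ the relevant relation is $x_{p,q}=\alpha_p\,\alpha_q^{-1}\,\alpha_p^{-1}\,\alpha_q^{-1}$, i.e. $\alpha_p\,\alpha_q\,\alpha_p^{-1}=\alpha_q^{-1}$; when $p\leq l$ the relation is $x_{p,q}'=(\alpha_p\,\alpha_q)^2$ together with $\alpha_p^2=1$, which again gives $\alpha_p\,\alpha_q\,\alpha_p^{-1}=\alpha_q^{-1}$. In either case the two generators satisfy an \emph{infinite-dihedral-type} relation: conjugation by $\alpha_p$ inverts $\alpha_q$, and $\alpha_q$ has infinite order (it is not among the generators forced to square to $1$ when $q>l$ has $a^q_{p,1}=1$ only through this very relation---here one must check that $\alpha_q$ is genuinely of infinite order in the subgroup, using the abelianization computation of Proposition \ref{btfirsthomology} or the structure of the presentation). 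The subgroup $\langle \alpha_p,\alpha_q\rangle$ thus surjects onto the infinite dihedral group $D_\infty=\langle a,b\mid a^2,\,a b a^{-1}=b^{-1}\rangle$.

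The key arithmetic step is then to compute the lower central series of $D_\infty$ and observe it does not terminate. Setting $c=[\,a,b\,]=a b a^{-1} b^{-1}=b^{-2}$, one checks by induction that the $m$-th term $\gamma_m$ of the lower central series contains $b^{-2^{m-1}}$, so $\gamma_m\neq 1$ for all $m$; hence $D_\infty$ is not nilpotent. Since non-nilpotency of a quotient (here $D_\infty$) pulls back to non-nilpotency of $\langle\alpha_p,\alpha_q\rangle$, and non-nilpotency of a subgroup pulls back to non-nilpotency of $\pi_1(B_k)$, we conclude that a non-abelian $\pi_1(B_k)$ cannot be nilpotent.

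The main obstacle I anticipate is not the dihedral computation, which is routine, but rather justifying rigorously that $\alpha_q$ has infinite order inside the two-generator subgroup and that the map $\langle\alpha_p,\alpha_q\rangle\twoheadrightarrow D_\infty$ is well defined and has $b$ of infinite order---in other words, controlling the subgroup generated by two of the $\alpha_j$'s inside the full presentation without unintended collapsing relations. I would address this by invoking Proposition \ref{btfirsthomology}: since $a^q_{p,1}=1$ with $l<q$ forces $\bar\alpha_q$ to have order $2$ in $H_1$ but this is consistent with $\alpha_q$ having infinite order in $\pi_1$, I would instead exhibit an explicit homomorphism from $\pi_1(B_k)$ onto $D_\infty$ (sending $\alpha_p\mapsto a$, $\alpha_q\mapsto b$, and all other generators to $1$, checking it respects every relation in $R$), which simultaneously proves $b$ has infinite order and that the target is genuinely $D_\infty$. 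Verifying that this assignment is compatible with all relations in \eqref{rels} is the one place requiring care.
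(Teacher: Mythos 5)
Your overall strategy---passing to the contrapositive and exhibiting the infinite dihedral group $D_\infty$ as the witness to non-nilpotency---is sound, and it is in fact the same computation the paper performs, only carried out in a different place. The paper works inside $\pi_1(B_k)$ itself: it shows inductively that $\alpha_q^{2^{m+1}}\in\pi_1(B_k)^{(m)}$ using the relation $\alpha_p\,\alpha_q^{-1}\,\alpha_p^{-1}=\alpha_q$, and then certifies $\alpha_q^{2^m}\neq 1$ for all $m$ by means of the embedding $\pi_1(B_k)\subset W(\Sigma_k)$, where $\alpha_q^{2^m}=(s_{q,0}\,s_{q,1})^{2^m}$ and $s_{q,0}\,s_{q,1}$ has infinite order in the Coxeter group $W$. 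This is exactly how the paper resolves the ``infinite order of $\alpha_q$'' issue that you correctly identify as the crux; your alternative of constructing a surjection onto $D_\infty$ is a legitimate, genuinely different route which would make the argument independent of the Coxeter group.

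However, the explicit homomorphism you propose does not exist in general, so there is a genuine gap. You send $\alpha_p\mapsto a$, $\alpha_q\mapsto b$ and \emph{all other generators to} $1$. Suppose there is a second index $p''\neq p$ with $p''<q$ and $a_{p'',1}^q=1$ (nothing in the hypotheses rules this out). If $p''>l$, the relation $x_{p'',q}=\alpha_{p''}\,\alpha_q^{-1}\,\alpha_{p''}^{-1}\,\alpha_q^{-1}$ from \eqref{rels} would map to $1\cdot b^{-1}\cdot 1\cdot b^{-1}=b^{-2}\neq 1$ in $D_\infty$; if $p''\leq l$, the relation $x_{p'',q}^\prime=(\alpha_{p''}\,\alpha_q)^2$ would map to $b^2\neq 1$. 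Either way your assignment does not respect \eqref{rels}, so the ``checking'' step you flagged would actually fail. The repair is to treat all rows interacting nontrivially with $\alpha_q$ alike: define $\phi(\alpha_q)=b$, $\phi(\alpha_j)=a^{a_{j,1}^q}$ for $j<q$, and $\phi(\alpha_j)=1$ for $j>q$ (consistent since $A$ is upper triangular, so $a_{j,1}^q=0$ for $j>q$). Then every relation of \eqref{rels} not involving $\alpha_q$ lands in the subgroup $\langle a\rangle$ with even exponent sum, hence dies; every relation involving $\alpha_q$ and some $\alpha_j$ maps to $1$, trivially when $a_{j,1}^q=0$ and via $a\,b^{\pm 1}a^{-1}=b^{\mp 1}$ when $a_{j,1}^q=1$. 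Since $a_{p,1}^q=1$, the map $\phi$ is onto $D_\infty$, and non-nilpotency of $\pi_1(B_k)$ follows because quotients of nilpotent groups are nilpotent. Note also that your first idea---arguing directly with the subgroup $\langle\alpha_p,\alpha_q\rangle$---cannot be run off the presentation \eqref{rels} alone, since a subgroup does not inherit a presentation by restricting the relations; it is precisely the corrected surjection $\phi$ (or the paper's Coxeter group argument) that makes the dihedral picture rigorous.
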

 \begin{proof}
  Observe by (\ref{W}) that \begin{equation}\label{aq2}\alpha_j^2=(s_{j,0}\cdot s_{1,1}^{a_{j,1}^1}\cdot s_{1,2}^{a_{j,2}^1}\cdots s_{1,n_1}^{a_{j,n_1}^1}\cdots s_{k,1}^{a_{j,1}^k}\cdots s_{k,n_k}^{a_{j,n_k}^k})^2=(s_{j,0}\cdot s_{j,1})^2\neq1\end{equation} for $j>l$ and $\alpha_j^2=1$ for $j\leq l$. If $\pi_1(B_k)$ is not abelian then by Theorem \ref{fungp}, there exists a $q>l$ and a $p<q$ such that $a_{p,1}^q=1$. Then it follows from Proposition \ref{btfirsthomology} that $\alpha_q^2\in[\pi_1(B_k),\pi_1(B_k)]$. Further, by (\ref{rels}), $\alpha_p\,\alpha_q^{-2}\,\alpha_p^{-1}\,\alpha_q^{2}=(\alpha_p\,\alpha_q^{-1}\,\alpha_p^{-1})^2\,\alpha_q^{2}=\alpha_q^{2}\,\alpha_q^{2}=\alpha_q^{4}\in[\pi_{1}(B_k),[\pi_1(B_k),\pi_1(B_k)]]$. Proceeding similarly by induction we get that \[\alpha_p\,\alpha^{-2^m}_q\,\alpha_p^{-1}\,\alpha^{2^m}_q=\alpha_q^{2^{m+1}}\in\pi_{1}(B_k)^{(m)}.\] Here $\pi_1(B_k)^{(1)}:=[\pi_1(B_k),\pi_1(B_k)]$ and $\pi_{1 }(B_k)^{(m)}:=[\pi_1(B_k), \pi_1(B_k)^{(m-1)}]$. 
  Finally, by (\ref{aq2}) and the fact that $s_{q,0}\cdot s_{q,1}$ is of infinite order in $W$, we have that $\alpha_q^{2^m}=(s_{q,0}\cdot s_{q,1})^{2^m}\neq1$ for any $m$. The proposition follows.  
 \end{proof}
 
\begin{remark}
 Here we mention that the fundamental group of a real toric variety is not in general solvable. For example, the non-orientable surfaces of genus $g$ are real toric varieties (see for example \cite[Section 4.5, Remark 4.5.2]{jurkiewiczthesis} and \cite[Remark 3.3]{umafundamental}), whose fundamental groups contain free subgroups of rank $g-1$ (see for example \cite[p. 62, Section 4]{bmms}). Thus whenever $g\geq 3$, these groups are not solvable. 
\end{remark}

\subsection{Remark on higher homotopy groups} 
\begin{theorem}
 The higher homotopy groups $\pi_j(B_k)$ for $j\geq2$ are isomorphic to $\pi_j(\mathbb P^{n_1})\times\cdots\times\pi_j(\mathbb P^{n_k})$.
\end{theorem}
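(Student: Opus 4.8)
The plan is to exploit the iterated projective bundle structure established in Proposition~\ref{scisgbt} and to reduce the computation of $\pi_j(B_k)$ for $j\geq 2$ to a statement about the universal cover. First I would recall that $B_k$ is built as a tower
\[
B_k\xrightarrow{p_k}B_{k-1}\rightarrow\cdots\rightarrow B_1\xrightarrow{p_1}B_0=\{\text{pt}\},
\]
where each $p_j:B_j\to B_{j-1}$ is the projectivization $\mathbb P(\mathbf 1\oplus L_1\oplus\cdots\oplus L_{n_j})$ of a rank-$(n_j+1)$ real vector bundle, hence a fiber bundle with fiber $\mathbb{RP}^{n_j}=\mathbb P^{n_j}$. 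The key idea is that the universal cover $\widetilde{B_k}$ should be identified with a product of spheres (or of universal covers of the fibers), so that the higher homotopy groups are those of that product. Since $\pi_j(\mathbb P^{n_i})\cong\pi_j(S^{n_i})$ for $j\geq 2$ (the universal cover of $\mathbb{RP}^{n_i}$ being $S^{n_i}$ when $n_i\geq 2$, and for $n_i=1$ the factor $S^1$ contributes nothing in degrees $\geq 2$), the asserted formula is equivalent to $\pi_j(B_k)\cong\prod_{i=1}^k\pi_j(S^{n_i})$ for $j\geq 2$.

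The cleanest route I would take is via the quotient construction of Remark~\ref{QC}: $B_k=X/G$ where $X=\prod_{i=1}^k S^{n_i}$ is the moment-angle manifold and $G=\mathbb Z_2^k$ acts freely (the action being free since $B_k$ is a manifold covered by $X$). Because $G$ is finite and acts freely, the projection $X\to B_k$ is a covering map, so $\pi_j(B_k)\cong\pi_j(X)$ for all $j\geq 2$. It then remains only to observe that
\[
\pi_j(X)=\pi_j\!\left(\prod_{i=1}^k S^{n_i}\right)\cong\prod_{i=1}^k\pi_j(S^{n_i})\cong\prod_{i=1}^k\pi_j(\mathbb P^{n_i})
\]
for $j\geq 2$, where the last isomorphism uses that the covering $S^{n_i}\to\mathbb P^{n_i}$ induces an isomorphism on $\pi_j$ for $j\geq 2$ (and the case $n_i=1$, where $S^1=X$-factor has trivial higher homotopy, matches $\pi_j(\mathbb P^1)=\pi_j(S^1)=0$).

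The one point that requires genuine care — and which I expect to be the main obstacle — is justifying that the action \eqref{action} of $G=\mathbb Z_2^k$ on $X=\prod_i S^{n_i}$ is \emph{free}. Freeness is precisely what the small-cover/characteristic-function hypothesis guarantees (it is the condition ensuring $M(\lambda)$ is a genuine manifold rather than an orbifold), and it is equivalent, via Proposition~\ref{minorbasisequivalence}, to the nonsingularity of the relevant submatrices $A_{j_1\cdots j_k}$. I would verify freeness directly from the explicit formula \eqref{action}: a nontrivial $(g_1,\ldots,g_k)$ fixing a point would force the stabilizer subgroup to coincide with the isotropy group $G_{F(p)}$, which by the basis condition in Definition~\ref{scdefn} is trivial at interior points and meets $G$ trivially elsewhere. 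Once freeness is in hand, the covering-space argument above is routine and the theorem follows. As an alternative, should one prefer to avoid the quotient picture, the same conclusion can be reached inductively up the tower using the long exact homotopy sequence of the fibration $\mathbb P^{n_j}\hookrightarrow B_j\to B_{j-1}$ together with the fact that the higher homotopy of $B_{j-1}$ already splits off the lower factors; but the covering-space argument is considerably shorter and I would present that as the main proof.
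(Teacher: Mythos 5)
Your proof is correct, but it takes a genuinely different route from the paper. The paper argues inductively up the tower: each $p_j\colon B_j\to B_{j-1}$ is a $\mathbb P^{n_j}$-bundle whose projectivized trivial summand $\mathbf 1\subset\mathbf 1\oplus L_1\oplus\cdots\oplus L_{n_j}$ furnishes a section $s_j$, so the long exact homotopy sequence of the fibration collapses to a split short exact sequence (the paper's (\ref{ses})), and the product decomposition follows by induction since the groups are abelian for $j\geq 2$. You instead pass directly to the moment-angle covering: $B_k=X/G$ with $X=\prod_i S^{n_i}$ and $G=\mathbb Z_2^k$ acting freely via (\ref{action}), so $X\to B_k$ is a covering and $\pi_j(B_k)\cong\pi_j(X)\cong\prod_i\pi_j(S^{n_i})\cong\prod_i\pi_j(\mathbb P^{n_i})$ for $j\geq 2$. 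The one point you flag as the main obstacle — freeness of the $G$-action — needs no new work: it is exactly what the paper's Remark \ref{QC} asserts (following Davis--Januszkiewicz), namely that the small cover is the quotient of the moment-angle manifold by a \emph{free} action of $\mathbb Z_2^{m-n}=\mathbb Z_2^k$; your sketched verification via the nonsingularity condition on $\lambda$ is the standard proof of that fact, but a citation suffices. Your argument is shorter, handles all $j\geq 2$ in one stroke, and makes the identification with sphere homotopy transparent; the paper's fibration-plus-section argument costs an induction but produces the split sequence (\ref{ses}) as a by-product, which the paper reuses (e.g.\ to note an alternative derivation of the presentation of $\pi_1(B_k)$) — information at the $\pi_1$ level that your covering-space argument does not give, since the covering yields a different extension, with $\pi_1(X)\subset\pi_1(B_k)$ of index $2^k$ rather than a splitting over $\pi_1(B_{k-1})$.
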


\begin{proof}
 Note that the fibration $\mathbb P^{n_k}\xhookrightarrow{i} B_k\xrightarrow{p_k} B_{k-1}$ induces a long exact sequence of homotopy groups
\begin{equation}\label{les}
 \cdots \rightarrow\pi_{j}(\mathbb P^{n_k})\xrightarrow{i_*}\pi_{j}(B_k)\xrightarrow{(p_k)_*}\pi_j(B_{k-1})\xrightarrow{\partial_*}\pi_{j-1}(\mathbb P^{n_k})\rightarrow\cdots
\end{equation}

Since $B_k=\mathbb P(\mathbf 1\oplus L_1\oplus\cdots\oplus L_{n_k})$ the zero sections of each $L_i\rightarrow B_{k-1}$ for $1\leq i\leq n_k$ define a section of $p_k:B_k\rightarrow B_{k-1}$, namely the map $s_k:B_{k-1}\rightarrow B_k$ that sends any point to $[(1,0,\cdots,0)]$ in the fiber above that point. The induced map, $(s_k)_*:\pi_j(B_{k-1})\rightarrow\pi_j(B_k)$ is a right inverse of $(p_k)_*$. This in particular, makes all the maps $(p_k)_*$ surjective so that the maps $\partial_*$ are identically $0$ and hence the maps $i_*$ are injective. Thus we have the split exact sequence
\begin{equation}\label{ses}
 0\rightarrow\pi_j(\mathbb P^{n_k})\xrightarrow{i_*}\pi_j(B_k)\xrightarrow{(p_k)_*}\pi_j(B_{k-1})\rightarrow0.
\end{equation}

Since $\pi_j(B_k)$ is abelian for $j\geq2$, by induction on $k$ we get that $\pi_j(B_k)\simeq\pi_j(\mathbb P^{n_1})\times\cdots\times\pi_j(\mathbb P^{n_k})$.
\end{proof}

\section{Orientability and Spin structure}\label{4}

The results in this section extend those in \cite[Section 3]{umaraisaspin}. We begin by recalling the presentation of the cohomology ring with $\mathbb Z_2$ coefficients of $B_k$. We then give a recursive formula for the total Stiefel-Whitney class and closed formulae for the first and second Stiefel-Whitney classes of $B_k$. We hence obtain necessary and sufficient conditions for orientability and spin structure on $B_k$ in terms of certain identities on the entries $a_{i,l_j}^j$ of the matrix $A$.

\begin{proposition}\label{cohoring}
 Let $\mathcal R$ be the ring $\mathbb Z_2[x_{10},\cdots,x_{1n_1},\cdots,x_{k0},\cdots,x_{kn_k}]$ and let $\mathcal I$ denote the ideal in $\mathcal R$ generated by the following set of elements 
 \begin{equation}\label{ideal}
  \{x_{i0}\,x_{i1}\cdots x_{in_i}\,,\,x_{il_i}+\sum_{j=1}^ia_{jl_i}^i\,x_{j0}\,\forall\, 1\leq i\leq k\,,\,1\leq l_i\leq n_i\}.
 \end{equation}
 As a graded $\mathbb Z_2$-algebra $H^*(B_k;\mathbb Z_2)$ is isomorphic to $\mathcal{R/I}$.
\end{proposition}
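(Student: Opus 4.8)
The plan is to read off $H^*(B_k;\mathbb Z_2)$ from the mod-$2$ face-ring presentation of the real toric variety $X_{\Sigma_k}$ (the one associated in the previous proposition to the smooth projective fan $\Sigma_k$), under which the generator $x_{il_i}$ of $\mathcal R$ corresponds to the Poincar\'e dual $[V(\rho)]$ of the invariant divisor attached to the edge $u^i_{l_i}$. By the $\mathbb Z_2$-analogue of \cite[Theorem 4.3.1]{jurkiewiczthesis} already used in Lemma \ref{ELB} and Remark \ref{ELBrem}, one has
\[
 H^*(X_{\Sigma_k};\mathbb Z_2)\cong\mathbb Z_2[\,x_{il_i}:1\le i\le k,\ 0\le l_i\le n_i\,]\big/\bigl(\mathcal I_{SR}+\mathcal J\bigr),
\]
where $\mathcal I_{SR}$ is the Stanley--Reisner ideal generated by the squarefree monomials $\prod_{\rho\in S}x_\rho$ indexed by the minimal collections $S$ of edges that fail to span a cone, and $\mathcal J$ is the linear ideal generated by $\sum_\rho\langle m,v_\rho\rangle\,x_\rho$, $m\in M$. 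The whole argument then reduces to matching $\mathcal I_{SR}+\mathcal J$ with the ideal $\mathcal I$ of (\ref{ideal}).

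First I would compute $\mathcal I_{SR}$. By the very definition of $\Sigma_k$ a sub-collection of $\{u^1_0,\dots,u^k_{n_k}\}$ spans a cone precisely when it omits at least one vector from each block $\{u^i_0,\dots,u^i_{n_i}\}$; hence the minimal non-faces are exactly these $k$ blocks and
\[
 \mathcal I_{SR}=\bigl(x_{i0}\,x_{i1}\cdots x_{in_i}:1\le i\le k\bigr),
\]
which is the first family of generators of $\mathcal I$.

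Next I would extract $\mathcal J$. Taking $m=(u^i_{l_i})^*$ in the basis of $M$ dual to $\{u^1_1,\dots,u^k_{n_k}\}$ and pairing against every edge vector, the only nonzero pairings are $\langle m,u^i_{l_i}\rangle=1$ and the $\langle m,u^j_0\rangle$, the latter read off from the defining relations of the $u^j_0$, whose coefficients are precisely the matrix entries $a^i_{j,l_i}$; reducing modulo $2$ yields the relation $x_{il_i}+\sum_{j}a^i_{j,l_i}\,x_{j0}$. Upper-triangularity of $A$ (Proposition \ref{conjugate}) forces $a^i_{j,l_i}=0$ for $j>i$, so the sum runs over $1\le j\le i$, matching the second family of generators of $\mathcal I$. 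Since these $n$ linear relations eliminate each $x_{il_i}$ with $l_i\ge1$ in favour of $x_{10},\dots,x_{k0}$—consistent with $H^1(B_k;\mathbb Z_2)\simeq\mathbb Z_2^{\,k}$ coming from Remark \ref{ELBrem}—the two ideals coincide and $H^*(B_k;\mathbb Z_2)\cong\mathcal R/\mathcal I$.

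The step demanding the most care is the computation of $\mathcal J$: one must verify that the pairings $\langle(u^i_{l_i})^*,u^j_0\rangle$ reproduce exactly the entries $a^i_{j,l_i}$ with the correct placement of indices, and that the mod-$2$ reduction absorbs the signs in the definition of $u^j_0$. A self-contained alternative bypasses the toric dictionary and argues by induction on $k$ via the real projective bundle formula (Leray--Hirsch with $\mathbb Z_2$-coefficients): since $B_k=\mathbb P(\mathbf 1\oplus L_1\oplus\cdots\oplus L_{n_k})$ over $B_{k-1}$,
\[
 H^*(B_k;\mathbb Z_2)\cong H^*(B_{k-1};\mathbb Z_2)[x_{k0}]\big/\Bigl(\prod_{l=0}^{n_k}\bigl(x_{k0}+w_1(L_l)\bigr)\Bigr),\quad L_0=\mathbf 1 .
\]
Setting $x_{kl_k}:=x_{k0}+w_1(L_{l_k})$ turns the bundle relation into $x_{k0}x_{k1}\cdots x_{kn_k}=0$, while Lemma \ref{ELB} and Remark \ref{ELBrem} give $w_1(L_{l_k})=\sum_{j=1}^{k-1}a^k_{j,l_k}\,x_{j0}$, producing the linear relations in block $k$; the inductive hypothesis supplies the relations coming from the lower blocks. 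On this route the crux is the very same Stiefel--Whitney class identification, now furnished directly by the support-function description of $L_{l_k}$ in Remark \ref{ELBrem}.
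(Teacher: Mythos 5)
Your proposal is correct. Its primary route is the same as the paper's: the paper disposes of Proposition \ref{cohoring} in two sentences, citing \cite[Theorem 4.3.1]{jurkiewiczthesis} for the real toric variety $X_{\Sigma_k}$, or alternatively \cite[Theorem 4.14]{davisjanus} for the small cover over $\prod_{i=1}^k\Delta^{n_i}$, and leaves the identification of the resulting presentation with $\mathcal{R}/\mathcal{I}$ to the reader. What you supply on that route is precisely this omitted verification: the minimal non-faces of $\Sigma_k$ are the $k$ blocks $\{u^i_0,\cdots,u^i_{n_i}\}$, yielding the monomial generators $x_{i0}x_{i1}\cdots x_{in_i}$, and pairing the dual basis vectors $(u^i_{l_i})^*$ against the $u^j_0$ yields the linear generators $x_{il_i}+\sum_{j=1}^{i}a^i_{j,l_i}x_{j0}$, the diagonal entry $a^i_{i,l_i}=1$ accounting for the $x_{i0}$ term and upper-triangularity truncating the sum at $j=i$. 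Your second, inductive argument via the $\mathbb Z_2$ projective-bundle relation is genuinely different from the paper's proof of this proposition, but note that the paper later asserts exactly that presentation, namely (\ref{presind}) with relations (\ref{relnind}), in the proof of Theorem \ref{totswc} without derivation; your induction, together with the identification $w_1(L_{l_k})=\sum_{j=1}^{k-1}a^k_{j,l_k}\,x_{j0}$ furnished by Lemma \ref{ELB} and Remark \ref{ELBrem}, would double as a self-contained justification of that step. In short, the paper's proof is a citation, your first route fills in its details, and your second route trades the appeal to \cite{jurkiewiczthesis} for an elementary induction on $k$ that the paper implicitly relies on anyway.
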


\begin{proof}
 Since $B_k$ is the toric variety associated to the smooth projective fan $\Sigma_k$ the result follows from \cite[Theorem 4.3.1]{jurkiewiczthesis}. Alternately, since $B_k$ is the small cover over the polytope $\prod_{i=1}^k\Delta^{n_i}$ with characteristic function $\lambda$, the result follows from \cite[Theorem 4.14]{davisjanus}. 
\end{proof}

Let $w_p(B_k)$ denote the $p$th Stiefel-Whitney class of $B_k$ for $0\leq p\leq n$ with the understanding that $w_0(B_k)=1$. Then $w(B_k)=1+w_1(B_k)+\cdots+w_n(B_k)$ is the total Stiefel-Whitney class of $B_k$.

\begin{theorem}\label{totswc}
 The following holds in the $\mathbb Z_2$-algebra $\mathcal {R/I}$ :
 \begin{equation}\label{rft}
  w(B_k)=w(B_{k-1})\cdot\left(\prod_{l_k=1}^{n_k}(1+\sum_{j=1}^ka_{j,l_k}^k\,x_{j0})\right)\cdot(1+x_{k0}).
 \end{equation}
\end{theorem}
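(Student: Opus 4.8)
The plan is to exploit the realization of $B_k$ as the projectivization $B_k=\mathbb P(\mathbf 1\oplus L_1\oplus\cdots\oplus L_{n_k})$ of a rank-$(n_k+1)$ real vector bundle over $B_{k-1}$ furnished by Proposition \ref{scisgbt}, and to compute the total Stiefel--Whitney class through the fibration $\mathbb P^{n_k}\hookrightarrow B_k\xrightarrow{p_k}B_{k-1}$. First I would split the tangent bundle: from the exact sequence $0\to T_f\to TB_k\to p_k^*TB_{k-1}\to 0$, where $T_f$ is the fibrewise (vertical) tangent bundle, the Whitney product formula gives $w(B_k)=w(T_f)\cdot p_k^*w(B_{k-1})$ inside $H^*(B_k;\mathbb Z_2)\cong\mathcal R/\mathcal I$. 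Since $p_k$ admits a section, $p_k^*$ is injective and $p_k^*w(B_{k-1})$ is identified with $w(B_{k-1})$ expressed in the generators $x_{j0}$ with $j\le k-1$.

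The next step is to compute $w(T_f)$. Writing $\gamma\hookrightarrow p_k^*E$ for the tautological real line bundle over $B_k$, the fibrewise description $T_f\cong\gamma^*\otimes(p_k^*E/\gamma)$ together with the Euler sequence $0\to\mathbf 1\to\gamma^*\otimes p_k^*E\to T_f\to 0$ yields $w(T_f)=w(\gamma^*\otimes p_k^*E)$. Over $\mathbb Z_2$ a real line bundle is its own inverse, so $\gamma^*\cong\gamma$, and splitting $E=\mathbf 1\oplus L_1\oplus\cdots\oplus L_{n_k}$ gives $\gamma\otimes p_k^*E\cong\gamma\oplus\bigoplus_{l_k=1}^{n_k}(\gamma\otimes p_k^*L_{l_k})$. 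Using $w_1(\gamma\otimes\ell)=w_1(\gamma)+w_1(\ell)$ for real line bundles, the Whitney product formula then gives
\begin{equation*}
 w(T_f)=\bigl(1+w_1(\gamma)\bigr)\cdot\prod_{l_k=1}^{n_k}\bigl(1+w_1(\gamma)+p_k^*w_1(L_{l_k})\bigr).
\end{equation*}

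It remains to identify the geometric classes with the algebraic generators. Comparing the relation $x_{k0}\,x_{k1}\cdots x_{kn_k}=0$ of Proposition \ref{cohoring} with the defining relation $\prod_{l_k=0}^{n_k}\bigl(w_1(\gamma)+p_k^*w_1(L_{l_k})\bigr)=0$ of a projectivized Whitney sum of line bundles (with $L_0=\mathbf 1$) identifies $w_1(\gamma)=x_{k0}$ and $x_{kl_k}=x_{k0}+p_k^*w_1(L_{l_k})$. Combined with the relation $x_{kl_k}=\sum_{j=1}^{k}a_{j,l_k}^k\,x_{j0}$ and the fact that the diagonal blocks of the matrix (\ref{UTmatrix}) equal $\mathbf 1$ (so $a_{k,l_k}^k=1$), this gives $p_k^*w_1(L_{l_k})=\sum_{j=1}^{k-1}a_{j,l_k}^k\,x_{j0}$ and hence $w_1(\gamma)+p_k^*w_1(L_{l_k})=\sum_{j=1}^{k}a_{j,l_k}^k\,x_{j0}$. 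Substituting these into the two displays above yields (\ref{rft}).

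The main obstacle is not any single computation but the bookkeeping in this last step: one must match $w_1(\gamma)$ and $w_1(L_{l_k})$ consistently with the construction of the bundles $L_{l_k}$ in Proposition \ref{scisgbt} and with the cohomology relations of Proposition \ref{cohoring}, all while keeping track that everything happens with real line bundles and $\mathbb Z_2$-coefficients, so that $\gamma^*\cong\gamma$ holds and the first Stiefel--Whitney class is additive under tensor product.
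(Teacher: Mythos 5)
Your proof is correct, but it follows a genuinely different route from the paper's. The paper's proof is short because it leans on quoted machinery: it invokes the Davis--Januszkiewicz formula \cite[Corollary 6.8]{davisjanus} for the total Stiefel--Whitney class of a small cover, $w(B_k)=\prod_{i=1}^{k}\prod_{l_i=0}^{n_i}(1+x_{il_i})$ (formula (\ref{sw})), identifies $p_k^*w(B_{k-1})$ with the corresponding sub-product over $i\leq k-1$, and then rewrites the remaining factors $1+x_{kl_k}$ using the linear relations $x_{kl_k}=\sum_{j=1}^{k}a_{j,l_k}^k\,x_{j0}$ from (\ref{ideal}). You never invoke that formula; instead you rederive the inductive step from the projective-bundle geometry of Proposition \ref{scisgbt}: the vertical/horizontal splitting of $TB_k$, the identification $T_f\cong\gamma^*\otimes(p_k^*E/\gamma)$ with its Euler sequence, self-duality of real line bundles, and additivity of $w_1$ under tensor product. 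Your computation of $w(T_f)$ is correct, and your use of the unipotent form (\ref{UTmatrix}) (so that $a_{k,l_k}^k=1$) is exactly what makes the factors come out as $1+\sum_{j=1}^k a_{j,l_k}^k\,x_{j0}$. What your route buys is self-containedness and a structural explanation of the product shape of (\ref{rft}) by pure bundle theory; what the paper's route buys is brevity, and the avoidance of the one genuinely delicate point in yours, namely the dictionary between the geometric classes and the abstract generators of Proposition \ref{cohoring}.

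That dictionary is the only place where your write-up falls short of a complete proof: merely ``comparing'' the relation $x_{k0}\,x_{k1}\cdots x_{kn_k}=0$ with $\prod_{l_k=0}^{n_k}\bigl(w_1(\gamma)+p_k^*w_1(L_{l_k})\bigr)=0$ does not identify $w_1(\gamma)$ with $x_{k0}$; distinct elements of a ring can satisfy identical-looking relations. Two standard repairs exist. Geometrically, under the isomorphism of Proposition \ref{cohoring} the class $x_{kl_k}$ is Poincar\'e dual to the characteristic submanifold over the facet $F^k_{l_k}$, which in the model of Proposition \ref{scisgbt} is precisely the locus where the $L_{l_k}$-component of a line vanishes, i.e.\ the transverse zero set of the section of $\gamma^*\otimes p_k^*L_{l_k}$ induced by the projection $p_k^*E\to p_k^*L_{l_k}$; hence $x_{kl_k}=w_1(\gamma)+p_k^*w_1(L_{l_k})$ on the nose, and $x_{k0}=w_1(\gamma)$ because $L_0=\mathbf 1$. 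Alternatively, purely algebraically: $1,t,\cdots,t^{n_k}$ is a free $H^*(B_{k-1};\mathbb Z_2)$-module basis of $H^*(B_k;\mathbb Z_2)$ both for $t=w_1(\gamma)$ (projective bundle theorem) and for $t=x_{k0}$ (the presentation (\ref{presind})), so comparing degree-one parts gives $w_1(\gamma)=x_{k0}+b$ with $b$ pulled back from $B_{k-1}$, and each of $w_1(\gamma)$, $x_{k0}$ satisfies a \emph{unique} monic degree-$(n_k+1)$ relation with coefficients pulled back from the base. Uniqueness forces the polynomial identity
\begin{equation*}
\prod_{l_k=0}^{n_k}\Bigl(X+b+p_k^*w_1(L_{l_k})\Bigr)\;=\;X\cdot\prod_{l_k=1}^{n_k}\Bigl(X+\sum_{j=1}^{k-1}a_{j,l_k}^k\,x_{j0}\Bigr)
\end{equation*}
in $H^*(B_{k-1};\mathbb Z_2)[X]$; substituting $X=1+x_{k0}$ turns the left-hand side into your expression for $w(T_f)$ and the right-hand side into the last two factors of (\ref{rft}), without ever having to pin down $b$ or match individual factors. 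With either repair, your argument is complete.
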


\begin{proof}
 From \cite[Corollary 6.8]{davisjanus} for $B_k$ we have 
 \begin{equation}\label{sw}
  w(B_k)=\prod_{i=1}^k\prod_{l_i=0}^{n_i}(1+x_{il_i}).
 \end{equation}
 Note that the defining matrix for $B_{k-1}$ is the $(k-1)\times (n-n_k)$ submatrix of $A$ obtained by deleting the $k$th row and the last $n_k$ columns of $A$. Moreover, via pullback along $p_k^*$, where $p_k$ is the map in (\ref{bt}), $H^*(B_{k-1};\mathbb Z_2)$ can be identified with the subring $\mathcal{R'/I'}$ of $\mathcal{R/I}$ where $\mathcal R'=\mathbb Z_2[x_{10},\cdots,x_{1n_1},\cdots, x_{k-1\ 0},\cdots, x_{k-1\ n_{k-1}} ] $ and $\mathcal I' $ is the ideal generated by the relations 
 \begin{equation}\label{st1}
   \{x_{i0}\,x_{i1}\cdots x_{in_i}~,~ x_{il_i}+\sum_{j=1}^{i}a_{j,l_i}^i\cdot x_{j0} ~~\mbox{for}~~ 1\leq i\leq k-1\,,\,1\leq l_i\leq n_i\}.
 \end{equation} 
 Since $B_k$ is a $\mathbb P^{n_k}$-bundle over $B_{k-1}$, we further have the following presentation of $H^*(B_k;\mathbb Z_2)$ as an algebra over $H^*(B_{k-1};\mathbb Z_2)$: 
 \begin{equation}\label{presind}
  H^*(B_k;\mathbb Z_2)\simeq H^*(B_{k-1};\mathbb Z_2)[x_{k0},\cdots,x_{k n_k}]/ J
 \end{equation}
 where $J$ is the ideal generated by the relations 
 \begin{equation}\label{relnind}
  x_{k0}\,x_{k1}\cdots x_{kn_k}, ~~x_{kl_k}+\sum_{j=1}^{k}a_{j,l_k}^kx_{j0} ~~\mbox{for}~~ 1\leq l_k\leq n_k.
 \end{equation}
 Furthermore, via $p_k^*$ we can identify $w(B_{k-1})$ with the expression :
 \begin{equation}\label{sw1}
  w(B_{k-1})=\prod_{i=1}^{k-1}\prod_{l_i=0}^{n_i}(1+x_{il_i}).
 \end{equation}
 in $\mathcal {R'}$ where $x_{il_i}$ for $1\leq i\leq k-1$ and $1\leq i\leq n_i$ satisfy the relations (\ref{st1}). Now by (\ref{sw}) and (\ref{sw1}), it follows that,
 \begin{equation}
  w(B_k)=w(B_{k-1})\cdot\prod_{l_k=0}^{n_k}(1+x_{kl_k}).
 \end{equation}
 Using the relations in $\mathcal I$ we get (\ref{rft}).
 Hence the theorem.
 \end{proof}

\begin{corollary}\label{w1result}
 We have the following expression for $w_1(B_k)$ in $\mathcal {R/I}$ :
 \begin{equation}\label{w1}
  w_1(B_k)=\sum_{j=1}^k(1+\sum_{i=1}^k\sum_{l_i=1}^{n_i}a_{j,l_i}^i)\cdot x_{j0}.
 \end{equation}
\end{corollary}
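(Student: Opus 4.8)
The plan is to read off the degree-one homogeneous component of the total Stiefel--Whitney class. The cleanest route starts from the product formula (\ref{sw}) established in the proof of Theorem \ref{totswc}, namely $w(B_k)=\prod_{i=1}^k\prod_{l_i=0}^{n_i}(1+x_{il_i})$. Since a product $\prod_s(1+y_s)$ has degree-one part $\sum_s y_s$, extracting the linear terms gives
$$w_1(B_k)=\sum_{i=1}^k\sum_{l_i=0}^{n_i}x_{il_i}=\sum_{i=1}^k x_{i0}+\sum_{i=1}^k\sum_{l_i=1}^{n_i}x_{il_i}.$$

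Next I would eliminate the generators $x_{il_i}$ with $l_i\geq 1$ using the linear relations in the ideal $\mathcal I$ of (\ref{ideal}): working over $\mathbb Z_2$, the relation $x_{il_i}+\sum_{j=1}^i a_{j,l_i}^i\,x_{j0}$ yields $x_{il_i}=\sum_{j=1}^i a_{j,l_i}^i\,x_{j0}$ in $\mathcal{R/I}$. Substituting this into the second sum and interchanging the order of summation, the coefficient of each $x_{j0}$ gathers the contributions $\sum_{i\geq j}\sum_{l_i=1}^{n_i}a_{j,l_i}^i$, together with the $1$ coming from the term $\sum_{i}x_{i0}$. Thus at this stage
$$w_1(B_k)=\sum_{j=1}^k\Big(1+\sum_{i\geq j}\sum_{l_i=1}^{n_i}a_{j,l_i}^i\Big)\,x_{j0}.$$

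The key step is to replace the restricted inner range $\sum_{i\geq j}$ by the full range $\sum_{i=1}^k$ appearing in (\ref{w1}). This is precisely where the normal form intervenes: by Proposition \ref{conjugate} and Remark \ref{J} we may assume $A$ is unipotent upper triangular of the form (\ref{UTmatrix}), so the vector entry in row $j$, column block $i$ vanishes whenever $j>i$; equivalently $a_{j,l_i}^i=0$ for $i<j$. Hence the terms with $i<j$ contribute nothing, $\sum_{i\geq j}=\sum_{i=1}^k$, and we obtain exactly (\ref{w1}).

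I expect no genuine obstacle beyond careful index bookkeeping, the one point requiring attention being the correct use of the upper-triangular structure to extend the summation range. As an alternative I would note that (\ref{w1}) also follows by induction on $k$ directly from the recursion (\ref{rft}): its degree-one part reads $w_1(B_k)=w_1(B_{k-1})+\sum_{l_k=1}^{n_k}\sum_{j=1}^k a_{j,l_k}^k\,x_{j0}+x_{k0}$, and comparing coefficients of $x_{j0}$ for $j<k$ and for $j=k$ (again using $a_{k,l_i}^i=0$ for $i<k$, together with $a_{k,l_k}^k=1$) recovers the stated closed formula.
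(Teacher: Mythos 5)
Your proposal is correct, and both of your routes work; note that the paper's own proof is precisely the argument you relegate to your last sentence: it compares the degree-one terms on either side of the recursion (\ref{rft}) from Theorem \ref{totswc} and inducts on $k$. Your primary argument is different in execution: you expand the global product formula (\ref{sw}) directly, substitute the linear relations $x_{il_i}=\sum_{j\leq i}a_{j,l_i}^i\,x_{j0}$ from the ideal (\ref{ideal}), collect the coefficient of each $x_{j0}$, and only then invoke the upper-triangular normal form (\ref{UTmatrix}) to pass from the range $i\geq j$ (which the relations naturally produce) to the full range $1\leq i\leq k$ appearing in (\ref{w1}). The ingredients are the same in both cases --- the Davis--Januszkiewicz formula for $w(B_k)$, the graded presentation of Proposition \ref{cohoring}, and the vanishing $a_{j,l_i}^i=0$ for $i<j$ --- but your version avoids induction entirely and makes explicit a point the paper's induction absorbs silently, namely that the stated formula sums over all $i$ only because the lower-triangular entries vanish. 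What the paper's route buys is brevity once Theorem \ref{totswc} is in hand; what yours buys is a self-contained, one-pass computation in $\mathcal{R/I}$ in which the origin of each coefficient is visible. One small point worth making explicit in either version: the ideal $\mathcal I$ is generated by homogeneous elements, so $\mathcal{R/I}$ is graded and ``extracting the degree-one part'' of a representative is legitimate; the paper gestures at this via the graded isomorphism of Proposition \ref{cohoring}, and your argument uses it in the same implicit way.
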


\begin{proof}
 Note that under the isomorphism of graded algebras $H^*(B_k;\mathbb Z_2)$ and $\mathcal{R/I}$, $w_1(B_k)\in H^1(B_k;\mathbb Z_2)$ corresponds to a polynomial of degree $1$ in $x_{il_i} ,1\leq i\leq k , 1\leq l_i\leq n_i$ modulo $\mathcal I$.  Thus by comparing the degree $1$-terms on either side of (\ref{rft}) and using induction on $k$ the lemma follows.

\end{proof}

\begin{corollary}\label{w2result}
 We have the following expression for $w_2(B_k)$ in $\mathcal {R/I}$ :
 \begin{equation}\label{w2}
   w_2(B_k)=\sum_{s=1}^l T_s^\prime\cdot x_{s0}^2 + \sum_{\substack{r,s=1\\ r<s}}^l T_{rs}^\prime\cdot x_{r0}\cdot x_{s0} + \sum_{\substack{1 \leq r\leq k\\ l<s\leq k\\ r<s}}(T_{rs}^\prime+a_{r,1}^s\cdot T_s^\prime)\cdot x_{r0}\cdot x_{s0}
 \end{equation}
 Where \begin{equation}
        T_s^\prime=\sum_{\substack{1\leq i\leq k\\ 1\leq l_i\leq n_i}}a_{s,l_i}^i+\sum_{\substack{1\leq i\leq k\\ 1\leq l_i<l_i'\leq n_i}}a_{s,l_i}^i\,a_{s,l_i'}^i+\sum_{\substack{1\leq i<j\leq k\\ 1\leq l_i\leq n_i\\ 1\leq l_j\leq n_j}}a_{s,l_i}^i\,a_{s,l_j}^j
       \end{equation}
 and \begin{equation}
      T_{rs}'= 1+ \sum_{\substack{1\leq i\leq k\\ 1\leq l_i\leq n_i}}(a_{r,l_i}^i+a_{s,l_i}^i)+\sum_{\substack{1\leq i\leq k\\ 1\leq l_i,l_i'\leq n_i\\ l_i\neq l_i'}}a_{r,l_i}^i\,a_{s,l_i'}^i+\sum_{\substack{1\leq i<j\leq k\\ 1\leq l_i\leq n_i\\ 1\leq l_j\leq n_j}}a_{r,l_i}^i\,a_{s,l_j}^j 
     \end{equation}
\end{corollary}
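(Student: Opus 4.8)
The plan is to obtain $w_2(B_k)$ as the degree-two part of the total Stiefel--Whitney class and then normalize the resulting quadratic form. One route is to use the recursive formula (\ref{rft}) of Theorem \ref{totswc} together with induction on $k$, exactly paralleling the proof of Corollary \ref{w1result}; but the most transparent route is the direct one. By (\ref{sw}) we have $w(B_k)=\prod_{i=1}^k\prod_{l_i=0}^{n_i}(1+x_{il_i})$, so $w_2(B_k)$ is the second elementary symmetric polynomial $e_2$ in the facet classes $\{x_{il_i}\}$. Using the linear relations in $\mathcal I$, namely $x_{il_i}=\sum_{j=1}^{i}a_{j,l_i}^i\,x_{j0}$ for $l_i\ge 1$, I would substitute and rewrite this $e_2$ as a quadratic form in the free generators $x_{10},\dots,x_{k0}$.

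First I would collect, over $\mathbb Z_2$, the coefficient of each monomial $x_{s0}^2$ and $x_{r0}x_{s0}$ (with $r<s$) in this quadratic form. A pair of facet classes contributes to $x_{s0}^2$ precisely when both carry a nonzero $x_{s0}$-component after substitution, and to $x_{r0}x_{s0}$ when one supplies an $x_{r0}$-component and the other an $x_{s0}$-component; summing the resulting products of entries $a_{\cdot,\cdot}^{\cdot}$ (together with the single constant $1$ coming from the pair $(x_{r0},x_{s0})$ itself) is what yields the closed expressions $T_s'$ and $T_{rs}'$ respectively. This step is pure bookkeeping of the expansion of $e_2$.

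The substantive point is the treatment of the diagonal squares, where the split according to Remark \ref{l} enters. For a block with $n_s\ge 2$ (that is $s\le l$) there is no degree-two relation beyond the linear ones, so $x_{s0}^2$ survives and contributes the term $T_s'\,x_{s0}^2$, accounting for the first sum in (\ref{w2}). For a block with $n_s=1$ (that is $s>l$) the generator $x_{s0}x_{s1}$ of $\mathcal I$ gives, after substituting $x_{s1}=\sum_{j\le s}a_{j,1}^s x_{j0}$ and using unipotency ($a_{s,1}^s=1$), the relation $x_{s0}^2=\sum_{r<s}a_{r,1}^s\,x_{r0}x_{s0}$. I would use this to eliminate every diagonal square $x_{s0}^2$ with $s>l$: this removes those terms and adds $a_{r,1}^s\,T_s'$ to the coefficient of each $x_{r0}x_{s0}$ with $r<s$, which is exactly the modified coefficient $T_{rs}'+a_{r,1}^s T_s'$ appearing in the third sum of (\ref{w2}).

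I expect the main obstacle to be purely computational: tracking the many quadratic contributions over $\mathbb Z_2$ and verifying that after substitution they organize into $T_s'$ and $T_{rs}'$. Particular care is needed in the cross-block sums, where each unordered pair of distinct blocks contributes through both assignments of the row indices $r,s$, and in using the upper-triangularity of $A$ (so that $a_{p,\cdot}^q=0$ for $p>q$ and $a_{s,1}^s=1$) consistently when reducing $x_{s0}^2$ for $s>l$. As a consistency check I would re-derive the same answer from the recursion (\ref{rft}): extracting its degree-two part gives
\begin{equation*}
w_2(B_k)=w_2(B_{k-1})+w_1(B_{k-1})\Big(x_{k0}+\sum_{l_k=1}^{n_k}\sum_{j=1}^k a_{j,l_k}^k x_{j0}\Big)+\sum_{1\le l_k<l_k'\le n_k}\Big(\sum_{j=1}^k a_{j,l_k}^k x_{j0}\Big)\Big(\sum_{j=1}^k a_{j,l_k'}^k x_{j0}\Big)+x_{k0}\sum_{l_k=1}^{n_k}\sum_{j=1}^k a_{j,l_k}^k x_{j0},
\end{equation*}
and matching coefficients inductively, using Corollary \ref{w1result} for $w_1(B_{k-1})$, reproduces (\ref{w2}).
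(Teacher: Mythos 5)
Your route is exactly the paper's: substitute the linear relations of $\mathcal I$ into (\ref{sw}) to rewrite $w(B_k)$ as a product over the classes $x_{10},\dots,x_{k0}$ (the paper's (\ref{swex})), read off the degree-two coefficients to get an expression $\sum_s T_s'x_{s0}^2+\sum_{r<s}T_{rs}'x_{r0}x_{s0}$ (the paper's (\ref{w2pre})), and then for $s>l$ eliminate $x_{s0}^2$ via the relation $x_{s0}^2=\sum_{j<s}a_{j,1}^s x_{j0}x_{s0}$ obtained from $x_{s0}x_{s1}\in\mathcal I$ and unipotency (the paper's (\ref{sqfree})); your recursion-based consistency check via Theorem \ref{totswc} is extra.

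There is, however, one step that fails if you carry out your own bookkeeping rule honestly: the claim that the expansion ``yields'' $T_{rs}'$ as printed. As you yourself note, a cross-block pair of linear factors from blocks $i<j$ contributes \emph{both} assignments, $a_{r,l_i}^i a_{s,l_j}^j + a_{s,l_i}^i a_{r,l_j}^j$, to the coefficient of $x_{r0}x_{s0}$; so the last sum in $T_{rs}'$ must run over ordered pairs $i\neq j$, not just $i<j$ with the fixed assignment shown. The swapped terms require $s\le i<j$ and are \emph{not} killed by upper-triangularity. A concrete check: take $k=3$, $n_1=n_2=n_3=1$ (so $l=0$) with $a_{1,1}^2=0$, $a_{1,1}^3=a_{2,1}^3=1$. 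Then in $\mathcal{R}/\mathcal I$ one has $x_{10}^2=x_{20}^2=0$ and $x_{30}^2=x_{10}x_{30}+x_{20}x_{30}$, so
\begin{equation*}
w(B_3)=(1+x_{10})^2(1+x_{20})^2(1+x_{30})(1+x_{10}+x_{20}+x_{30})=1+x_{10}+x_{20},
\end{equation*}
giving $w_2(B_3)=0$; but the stated formula gives $(T_{12}'+a_{1,1}^2T_2')\,x_{10}x_{20}=x_{10}x_{20}\neq 0$, the discrepancy being exactly the omitted swapped term $a_{2,1}^2a_{1,1}^3=1$. With the symmetrized cross-block sum the formula returns $0$, as it should. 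So your plan proves the corollary only after this correction; note that the paper's own proof asserts (\ref{w2pre}) with the same one-sided sum and thus shares the issue, while its later simplification of $T_{rs}'$ in the proof of Theorem \ref{spin} implicitly treats the cross-block sum as the full sum over $i\neq j$ --- strong evidence that the printed $\sum_{i<j}$ is a slip in the statement rather than a defect of the (common) method.
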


\begin{proof}
 Using the relations (\ref{ideal}) in (\ref{sw}), $w(B_k)$ can be identified with the class of the following term in $\mathcal{R/I}$ :
 \begin{equation}\label{swex}
  \prod_{i=1}^k(1+x_{i0})\cdot\prod_{i=1}^k\prod_{l_i=1}^{n_i}\left(1+\sum_{j=1}^k a_{j,l_i}^i\cdot x_{j0}\right)
 \end{equation}
 
 (Here we note that $a_{j,l_i}^i=0$ for $j>i$.) Since Proposition \ref{cohoring} gives an isomorphism of graded $\mathbb Z_2$-algebras, the degree 2 term of $w(B_k)$, namely $w_2(B_k)$ can be identified with the degree $2$ term of the expression (\ref{swex}) which is the class of the following term in $\mathcal{R/I}$ :
 \begin{equation}\label{w2pre}
   \sum_{s=1}^k {T_s'}\cdot\, x_{s0}^2\ + \sum_{\substack{r,s=1\\ r<s}}^k {T_{rs}'}\cdot\, x_{r0}\cdot x_{s0}
 \end{equation} 
 For $l\leq s\leq k$, we can use the relations in (\ref{ideal}) to obtain,
 \begin{equation}\label{sqfree}
  x_{s0}^2=\sum_{j=1}^{s-1} a_{j,1}^s\cdot x_{j0}\cdot x_{s0}
 \end{equation}
 Using (\ref{sqfree}) in (\ref{w2pre}) we get (\ref{w2}). Hence the corollary.
\end{proof}

\begin{theorem}\label{orientable}
 The space $B_k$ is orientable if and only if
 \begin{equation}\label{orieqn}
  \sum_{i=1}^k\sum_{l_i=1}^{n_i}a_{j,l_i}^i\equiv 1\bmod2 \text{ for all }1\leq j\leq k.
 \end{equation}
\end{theorem}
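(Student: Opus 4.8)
**

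The plan is to characterize orientability via the first Stiefel-Whitney class, using the criterion that a smooth manifold is orientable if and only if $w_1 = 0$. The key input is the closed formula for $w_1(B_k)$ already derived in Corollary \ref{w1result}, namely
\begin{equation}\label{w1recall}
 w_1(B_k)=\sum_{j=1}^k\Bigl(1+\sum_{i=1}^k\sum_{l_i=1}^{n_i}a_{j,l_i}^i\Bigr)\cdot x_{j0}
\end{equation}
viewed as an element of $H^1(B_k;\mathbb Z_2)\cong(\mathcal{R/I})^1$. So the entire proof reduces to deciding when this degree-one class vanishes in the quotient ring.

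The main step is therefore to understand the degree-one part of $\mathcal{R/I}$ and, in particular, to verify that the classes $x_{10},\dots,x_{k0}$ are $\mathbb Z_2$-linearly independent there. I would argue this directly from the presentation in Proposition \ref{cohoring}: the relations $x_{il_i}+\sum_{j=1}^i a_{j,l_i}^i\,x_{j0}$ allow every generator $x_{il_i}$ (with $l_i\ge 1$) to be expressed as a $\mathbb Z_2$-combination of the $x_{j0}$, while the only other relations, the products $x_{i0}x_{i1}\cdots x_{in_i}$, are homogeneous of degree $n_i+1\ge 2$ and hence contribute nothing in degree $1$. Consequently $H^1(B_k;\mathbb Z_2)$ is spanned by $\{x_{10},\dots,x_{k0}\}$, and since $\dim_{\mathbb Z_2}H^1(B_k;\mathbb Z_2)=k$ (the manifold $B_k$ being an iterated $\mathbb{RP}^{n_i}$-bundle, equivalently by the rank count $d-n=k$ appearing in Remark \ref{ELBrem}), these $k$ classes form a basis. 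I expect this linear-independence verification to be the only real obstacle; everything else is formal.

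Granting that $\{x_{j0}\}_{j=1}^k$ is a basis of $H^1(B_k;\mathbb Z_2)$, the vanishing of \eqref{w1recall} is equivalent to the vanishing of each coefficient:
\begin{equation*}
 1+\sum_{i=1}^k\sum_{l_i=1}^{n_i}a_{j,l_i}^i\equiv 0\bmod 2\qquad\text{for all }1\le j\le k,
\end{equation*}
which is precisely the congruence \eqref{orieqn}. Combining this with the standard fact $w_1(B_k)=0\iff B_k$ orientable completes the argument. I would conclude by noting that this recovers the real Bott manifold criterion of Kamishima and Masuda (\cite[Lemma 2.2]{masudakamishimacohorigid}) as the special case $n_i=1$ for all $i$, where the inner double sum collapses to $\sum_{i=1}^k a_{j,1}^i$.
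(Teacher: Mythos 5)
Your proposal is correct and follows essentially the same route as the paper: both reduce the statement to the vanishing of $w_1(B_k)$ via Corollary \ref{w1result}, observe that $\{x_{10},\dots,x_{k0}\}$ is a $\mathbb Z_2$-basis of $H^1(B_k;\mathbb Z_2)$ under the isomorphism of Proposition \ref{cohoring}, and invoke the standard criterion that a compact connected smooth manifold is orientable if and only if $w_1=0$. The only difference is that you spell out the basis claim (spanning from the linear relations plus the dimension count $\dim_{\mathbb Z_2}H^1(B_k;\mathbb Z_2)=k$), which the paper simply asserts; this is a welcome but inessential elaboration.
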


\begin{proof}
 Note that via the isomorphism in Theorem \ref{cohoring} of graded algebras $\mathcal{R/I}$ and $H^*(B_k;\mathbb Z_2)$, $\{x_{10},\cdots,x_{k0}\}$ corresponds to a basis over $\mathbb Z_2$ of $H^1(B_k;\mathbb Z_2)$. Thus by (\ref{w1}) it follows that $w_1(B_k)=0$ if and only if 
 \begin{equation}
    \sum_{i=1}^k\sum_{l_i=1}^{n_i}a_{j,l_i}^i\equiv 1\bmod2 \text{ for all }1\leq j\leq k.
 \end{equation}
 Furthermore, since a necessary and sufficient condition for a compact connected differentiable manifold $M$ to be orientable is $w_1(M)=0$, the theorem follows.
\end{proof}

\begin{remark}
 Note that Theorem \ref{orientable} also follows as a direct consequence of \cite[Theorem 1.7]{naknishiorientability}.
\end{remark}

\begin{theorem}\label{spin}
 Let \begin{equation}\label{term1} T_s:=\sum_{\substack{1\leq i\leq k\\ 1\leq l_i<l_i'\leq n_i}}a_{s,l_i}^i\,a_{s,l_i'}^i+\sum_{\substack{1\leq i<j\leq k\\ 1\leq l_i\leq n_i\\ 1\leq l_j\leq n_j}}a_{s,l_i}^i\,a_{s,l_j}^j \end{equation}
 and \begin{equation}\label{term2} T_{rs}:=\sum_{\substack{1\leq i\leq k\\ 1\leq l_i\leq n_i}}a_{r,l_i}^i\,a_{s,l_i}^i. \end{equation}
 The orientable generalized real Bott manifold $B_k$ admits a spin structure if and only if the following identities hold :
 \begin{equation} T_s\equiv1\bmod2 \qquad \text{for}\quad 1\leq s\leq l\end{equation} 
 \begin{equation} T_{rs}\equiv0\bmod2 \qquad \text{for}\quad 1\leq r<s\leq l\end{equation}
 \begin{equation} \label{term3} T_{rs}+a_{r,1}^s\cdot(1+ T_s)\equiv0\bmod2\qquad \text{for}\quad 1\leq r\leq k\ ,\ l<s\leq k. \end{equation}
\end{theorem}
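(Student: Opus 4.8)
The plan is to use the standard fact that a closed smooth manifold $M$ admits a spin structure if and only if $w_1(M)=0$ and $w_2(M)=0$. Since $B_k$ is assumed orientable, Theorem \ref{orientable} already gives $w_1(B_k)=0$, so the entire statement reduces to deciding when $w_2(B_k)=0$. For this I would feed in the closed formula (\ref{w2}) from Corollary \ref{w2result}, which expresses $w_2(B_k)$ as a $\mathbb Z_2$-linear combination of the classes $x_{s0}^2$ and $x_{r0}x_{s0}$.

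The first genuine step is to note that the monomials occurring in (\ref{w2}), namely $\{x_{s0}^2 : 1\le s\le l\}$ together with $\{x_{r0}x_{s0}: 1\le r<s\le k\}$, are linearly independent over $\mathbb Z_2$ in $H^2(B_k;\mathbb Z_2)$. This follows from the ring presentation in Proposition \ref{cohoring}: the iterated $\mathbb P^{n_i}$-bundle structure yields, via Leray--Hirsch, the module basis $\{\prod_{i=1}^k x_{i0}^{e_i}: 0\le e_i\le n_i\}$, whose degree-$2$ part is exactly this set, the squares $x_{s0}^2$ with $s>l$ (i.e. $n_s=1$) having been eliminated through (\ref{sqfree}). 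Consequently $w_2(B_k)=0$ if and only if every coefficient in (\ref{w2}) vanishes, that is $T_s'\equiv 0$ for $1\le s\le l$, $T_{rs}'\equiv 0$ for $1\le r<s\le l$, and $T_{rs}'+a_{r,1}^s\,T_s'\equiv 0$ for $1\le r<s\le k$ with $l<s$.

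The second step is to rewrite these primed conditions in the unprimed form of the statement using the orientability hypothesis. Comparing the definition of $T_s'$ in Corollary \ref{w2result} with $T_s$ in (\ref{term1}), one has $T_s'=\big(\sum_{i,l_i}a_{s,l_i}^i\big)+T_s$, while (\ref{orieqn}) forces $\sum_{i,l_i}a_{s,l_i}^i\equiv 1$, so $T_s'\equiv 1+T_s$ and hence $T_s'\equiv 0\iff T_s\equiv 1$. For the mixed coefficients the crucial identity is that, by (\ref{orieqn}), $\big(\sum_{i,l_i}a_{r,l_i}^i\big)\big(\sum_{j,l_j}a_{s,l_j}^j\big)\equiv 1$; expanding this product and separating the diagonal contribution $T_{rs}$ of (\ref{term2}) from the same-block and cross-block off-diagonal terms that assemble into $T_{rs}'$, everything collapses mod $2$ to $T_{rs}'\equiv T_{rs}$. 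Substituting $T_s'\equiv 1+T_s$ and $T_{rs}'\equiv T_{rs}$ into the three families of conditions then yields precisely $T_s\equiv 1$ for $1\le s\le l$, $T_{rs}\equiv 0$ for $1\le r<s\le l$, and $T_{rs}+a_{r,1}^s(1+T_s)\equiv 0$ for $1\le r\le k$, $l<s\le k$, which are the stated identities.

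I expect the main obstacle to be the mod-$2$ bookkeeping in this last step: one must expand the product of the two row-sums into the diagonal term, the same-block off-diagonal sum, and the two cross-block sums, and then check under orientability that these combine to give exactly $T_{rs}'\equiv T_{rs}$, while respecting the $r\leftrightarrow s$ symmetry of the coefficient of $x_{r0}x_{s0}$. A secondary point needing care is the reduction (\ref{sqfree}) of $x_{s0}^2$ for $s>l$, which is what redistributes $T_s'$ as $a_{r,1}^s\,T_s'$ into the coefficient of $x_{r0}x_{s0}$ and is therefore responsible for the asymmetric shape of the third family of relations.
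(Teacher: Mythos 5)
Your proposal is correct and follows essentially the same route as the paper's proof: reduce spin to $w_2(B_k)=0$, take the coefficients in (\ref{w2}) with respect to the basis $\{x_{s0}^2 : 1\le s\le l\}\cup\{x_{r0}x_{s0}: r<s\}$ of $H^2(B_k;\mathbb Z_2)$, and use (\ref{orieqn}) to convert $T_s'\equiv 1+T_s$ and $T_{rs}'\equiv T_{rs}$. The only cosmetic difference is that you justify the basis via Leray--Hirsch while the paper counts dimensions with the Poincar\'e polynomial (\ref{poincare}) together with (\ref{sqfree}); your expansion of $\bigl(\sum_{i,l_i}a_{r,l_i}^i\bigr)\bigl(\sum_{j,l_j}a_{s,l_j}^j\bigr)\equiv 1$ is exactly the collapse the paper performs in (\ref{trs}).
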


\begin{proof}
  Using (\ref{orieqn}) we can simplify $T_s'$ and $T_{rs}'$ from Corollary \ref{w2result} as follows,
 \begin{equation}\label{ts}
  T_s'=1+\sum_{\substack{1\leq i\leq k\\ 1\leq l_i<l_i'\leq n_i}}a_{s,l_i}^i\,a_{s,l_i'}^i+\sum_{\substack{1\leq i<j\leq k\\ 1\leq l_i\leq n_i\\ 1\leq l_j\leq n_j}}a_{s,l_i}^i\,a_{s,l_j}^j=1+T_s
 \end{equation}
 \begin{equation}\label{trs}
 \begin{split}
  T_{rs}' & = 1+ \sum_{\substack{1\leq i\leq k\\ 1\leq l_i,l_i'\leq n_i\\ l_i\neq l_i'}}a_{r,l_i}^i\,a_{s,l_i'}^i+\sum_{\substack{1\leq i<j\leq k\\ 1\leq l_i\leq n_i\\ 1\leq l_j\leq n_j}}a_{r,l_i}^i\,a_{s,l_j}^j\\
   ~& =1+ \sum_{\substack{1\leq i\leq k\\ 1\leq l_i\leq n_i}}a_{r,l_i}^i\cdot(1+a_{s,l_i}^i)=1+1+T_{rs}=T_{rs}
  \end{split}
 \end{equation}
 Using (\ref{ts}), (\ref{trs}) in (\ref{w2}), $w_2(B_k)$ can be identified with the class of the following term in $\mathcal{R/I}$ :
 \begin{equation}\label{spineqn}
 \sum_{s=1}^l (1+T_s)\cdot x_{s0}^2 + \sum_{\substack{r,s=1\\ r<s}}^l T_{rs}\cdot x_{r0}\cdot x_{s0} + \sum_{\substack{1 \leq r\leq k\\ l<s\leq k\\ r<s}}(T_{rs}+a_{r,1}^s\cdot (1+T_s))\cdot x_{r0}\cdot x_{s0}
 \end{equation}
 By Proposition \ref{cohoring}, as a $\mathbb Z_2$-vector space, $H^2(B_k;\mathbb Z_2)$ is isomorphic to the subspace of $\mathcal{R/I}$  generated by the classes of $x_{s0}^2$ for $1\leq s\leq k$ and $x_{r0}\cdot x_{s0}$ for $1\leq r<s\leq k$. Further, since $B_k$ is constructed as an iterated sequence of projective bundles, the mod 2 Poincar\'{e} polynomial of $B_k$ has the following expression 
 \begin{equation}\label{poincare} P_t(B_k;\mathbb Z_2)=\prod_{i=1}^kP_t(\mathbb P^{n_i};\mathbb Z_2)=\prod_{i=1}^k(1+t+\cdots+t^{n_i}).\end{equation} 
 It follows from (\ref{poincare}) and (\ref{sqfree}) that $H^2(B_k;\mathbb Z_2)$ is freely generated by the classes of $x_{s0}^2$ for $1\leq s\leq l$ and $x_{r0}\cdot x_{s0}$ for $1\leq r<s\leq k$.
 Moreover, the necessary and sufficient condition for an orientable manifold $M$ to admit a spin structure is $w_2(M)=0$. Thus the theorem follows from (\ref{spineqn}).
\end{proof}

\subsection{{The case of a real Bott manifold}}

\begin{corollary}
  When $l=0$, the orientable real Bott manifold $B_k$ is Spin if and only if 
 \begin{equation}\label{btspin} T_{rs}+a_{r,1}^s\cdot(1+T_s)\equiv0\bmod2 \quad \text{ for all }\quad 1\leq r<s\leq k\end{equation}
 where $T_{rs}$ and $T_s$ are as in (\ref{term2}) and (\ref{term1}) respectively. Further, if we let $C:=A-I$ (where $A$ be the $k\times k$ upper triangular matrix associated to $B_k$ and $I$ is the $k\times k$ identity matrix) then (\ref{btspin}) is equivalent to 
 \begin{equation}\label{ourbtspin}
  \sum_{p=1}^k c_{r,p}\,c_{s,p} + c_{r,s}\cdot\sum_{\substack{p,q=1\\ p<q}}^k c_{r,p}\,c_{s,q}\equiv0\bmod2 \quad\text{ for all }\quad 1\leq r<s\leq k
 \end{equation}
 where $c_{i,j}$ are the entries of $C$.
 In particular, \cite[Theorem 3.2]{umaraisaspin} and, \cite[Theorem 1.2]{gasiorspin} follow.
 \end{corollary}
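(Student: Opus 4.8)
The plan is to specialize the general spin criterion from Theorem \ref{spin} to the real Bott case $l=0$ and then translate the resulting condition into the language of the matrix $C=A-I$. First I would set $l=0$ in the statement of Theorem \ref{spin}. When $l=0$ there are no indices $s$ with $1\leq s\leq l$ and no pairs $r<s\leq l$, so the first two families of identities ($T_s\equiv1$ and $T_{rs}\equiv0$) are vacuous. The only surviving family is the third one in \eqref{term3}, which for $l=0$ ranges over all $1\leq r<s\leq k$ (note that when $l=0$ every index $s$ satisfies $l<s\leq k$, and the constraint $1\leq r\leq k$ together with $r<s$ gives precisely $1\leq r<s\leq k$). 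This immediately yields \eqref{btspin}.

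The substantive part is proving the equivalence of \eqref{btspin} with \eqref{ourbtspin}. The key observation is that in the real Bott case each $n_i=1$, so every inner index $l_i$ runs over the single value $l_i=1$, and the double sums over $l_i<l_i'$ collapse to zero (there are no such pairs). Writing $a_{s}^i:=a_{s,1}^i$ for brevity, the quantities simplify to
\begin{equation*}
 T_s=\sum_{1\leq i<j\leq k}a_s^i\,a_s^j,\qquad T_{rs}=\sum_{i=1}^k a_r^i\,a_s^i.
\end{equation*}
Then I would record the relation between the entries $a_s^i$ and the entries $c_{s,i}$ of $C=A-I$. Since $A$ is unipotent upper triangular, its diagonal entries are $1$ and $C$ is strictly upper triangular; on the off-diagonal $c_{s,i}=a_s^i$, while the diagonal contributions cancel in all the products appearing above because the pairs are indexed by distinct $i,j$ or by $r\neq s$. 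The remaining task is the bookkeeping: substitute these expressions into the left-hand side of \eqref{btspin}, namely $T_{rs}+a_{r,1}^s\,(1+T_s)$, and verify term by term that it equals $\sum_{p=1}^k c_{r,p}c_{s,p}+c_{r,s}\sum_{p<q}c_{r,p}c_{s,q}$ modulo $2$.

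The main obstacle I anticipate is the careful handling of the $a_{r,1}^s\cdot(1+T_s)$ term, specifically confirming that the ``$+1$'' contributes exactly the factor $c_{r,s}$ (so that $a_{r,1}^s\cdot 1=c_{r,s}$, using that $r<s$ implies $a_{r,1}^s=c_{r,s}$) and that $a_{r,1}^s\cdot T_s$ reproduces $c_{r,s}\sum_{p<q}c_{r,p}c_{s,q}$ after matching the index ranges; one must check that the diagonal terms $p=s$ or $q=s$ in $T_s$ interact correctly with $c_{r,s}$ and do not spuriously survive. I would verify this by direct substitution, using $c_{i,i}=0$ and strict upper-triangularity to discard vanishing terms. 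Once the two polynomials in the $c_{i,j}$ agree modulo $2$ for every $1\leq r<s\leq k$, the equivalence follows. Finally, since \eqref{ourbtspin} is exactly the criterion appearing in \cite[Theorem 3.2]{umaraisaspin} and \cite[Theorem 1.2]{gasiorspin}, those results are recovered as immediate special cases, completing the proof.
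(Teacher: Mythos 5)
Your first step coincides with the paper's: putting $l=0$ in Theorem \ref{spin} makes the first two families of identities vacuous, and the third family becomes (\ref{btspin}); that part is fine and is exactly how the paper argues. The gap is in the translation to the matrix $C=A-I$, which is where all the content of the corollary lies. Your premise that ``the diagonal contributions cancel in all the products appearing above'' is false: the diagonal entries of $A$ equal $1$, not $0$, so you cannot simply replace $a_{s,1}^i$ by $c_{s,i}$ inside $T_{rs}$ and $T_s$. Keeping them, one finds for $r<s$ (using $a_{s,1}^r=0$ by upper triangularity) that
\begin{equation*}
T_{rs}=\sum_{p=1}^k c_{r,p}\,c_{s,p}+c_{r,s},
\qquad
T_s=\sum_{p<q}c_{s,p}\,c_{s,q}+\sum_{p\neq s}c_{s,p},
\end{equation*}
where the extra $c_{r,s}$ in $T_{rs}$ is the $i=s$ summand $a_{r,1}^s\,a_{s,1}^s$, and the linear term in $T_s$ comes from the pairs containing the index $s$. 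Neither term is ``discarded by $c_{i,i}=0$ and strict upper-triangularity''; on the contrary, the $c_{r,s}$ in $T_{rs}$ is essential, since mod $2$ it is precisely what cancels the summand $a_{r,1}^s\cdot 1=c_{r,s}$ coming from the ``$+1$'' in $a_{r,1}^s(1+T_s)$. Under your simplification that lone $c_{r,s}$ would spuriously survive.

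Second, and more seriously, you never invoke the orientability hypothesis (\ref{orieqn}) in the translation, whereas the paper's proof uses it explicitly at this exact point. For a real Bott manifold, (\ref{orieqn}) says each row of $A$ has odd sum, equivalently $\sum_{p\neq s}c_{s,p}\equiv 0\bmod 2$, and this is what kills the linear term of $T_s$ above. Without it one only gets
\begin{equation*}
T_{rs}+a_{r,1}^s\,(1+T_s)\equiv \sum_{p=1}^k c_{r,p}\,c_{s,p}+c_{r,s}\sum_{p<q}c_{s,p}\,c_{s,q}+c_{r,s}\sum_{p\neq s}c_{s,p} \pmod 2,
\end{equation*}
so the two expressions you want to match are \emph{not} equal as polynomials in the $c_{i,j}$; they agree only modulo the orientability relations, and the purely formal term-by-term verification you describe would stall. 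Note also that the correctly simplified expression carries $c_{s,p}\,c_{s,q}$ in the quadratic sum (this is what the paper's own displayed computation produces), whereas (\ref{ourbtspin}) as printed reads $c_{r,p}\,c_{s,q}$; your verification would have to confront this discrepancy rather than take the printed target for granted.
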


\begin{proof}
 The first statement follows immediately from Theorem \ref{spin} by putting $l=0$. The left hand side of (\ref{btspin}) is 
 \begin{equation}\label{LHS}
  \sum_{p=1}^k a_{r,1}^p\,a_{s,1}^p + a_{r,1}^s\cdot\big(1+\sum_{\substack{p,q=1\\ p<q}}a_{s,1}^p\,a_{s,1}^q\big)
 \end{equation}
 
 Since $c_{i,j}=a_{i,1}^j$ when $i\neq j$ and $c_{i,i}=a_{i,1}^i-1=0$, (\ref{LHS}) reduces to,
 \begin{equation*}
  \sum_{\substack{p=1\\ p\neq r,s}}^k c_{r,p}\,c_{s,p} + a_{r,1}^r\,a_{s,1}^r + a_{r,1}^s\,a_{s,1}^s + c_{r,s}\cdot\Big(1+\sum_{\substack{p,q=1\\ p<q , p,q\neq s}}^kc_{s,p}\,c_{s,q}+a_{s,1}^s\cdot\sum_{\substack{i=1\\ i\neq s}}^ka_{s,1}^i\Big).
  \end{equation*}
  
  Further, using (\ref{orieqn}) the above expression can be rewritten as 
  \begin{equation*}
   \sum_{\substack{p=1\\ p\neq r,s}}^k c_{r,p}\,c_{s,p} + 1\cdot 0 + c_{r,s}\cdot1 + c_{r,s}\cdot\Big(1+\sum_{\substack{p,q=1\\ p<q , p,q\neq s}}^kc_{s,p}\,c_{s,q}+1\cdot(1+a_{s,1}^s)\Big).
  \end{equation*}
  
  This further can be simplified to $$\sum_{p=1}^k c_{r,p}\,c_{s,p} + c_{r,s}\cdot\sum_{\substack{p,q=1\\ p<q}}^k c_{r,p}\,c_{s,q}.$$
 
  Thus (\ref{btspin}) is equivalent to (\ref{ourbtspin}). Hence \cite[Theorem 3.2]{umaraisaspin} follows. This also implies \cite[Theorem 1.2]{gasiorspin} by \cite[Theorem 3.10, Corollary 3.11]{umaraisaspin}.
\end{proof}

\subsection{Digraph interpretation}
 Choi, Masuda and Suh in \cite{masudachoidigraph} associate an acyclic directed graph to a real Bott manifold $M(A)$ by viewing the associated matrix $A$ as the adjacency matrix of the digraph. In \cite[Lemma 4.1]{masudachoidigraph}, they give necessary and sufficient conditions on the digraph for $M(A)$ to be orientable and to admit a symplectic form. In \cite[Theorem 4.5]{umaraisaspin} the authors give a necessary and sufficient condition on the digraph for $M(A)$ to admit a spin structure. 
 
 One can similarly associate a \emph{labeled multidigraph} to a generalized real Bott manifold. Let $B_k$ be the generalized real Bott manifold associated to the vector matrix $A$ from (\ref{UTmatrix}). Define $D_{B_k}$, as the graph with $k$ vertices $\{w_1,\cdots,w_k\}$ and edges as follows : there is an edge from $w_i$ to $w_j$ with label $a_{i,l_j}^j\in\{0,1\}$. Also for every $i$ there are $n_i$ loops (edges beginning and ending at the same vertex), each with label $1$, based at $w_i$. The \emph{out-degree} of a vertex $w_i$ is the number of edges with label $1$ that begin at $w_i$. It can be seen that $B_k$ is orientable if and only if every vertex in $D_{B_k}$ has odd out-degree. One can similarly make interpretations on $D_{B_k}$ for $B_k$ to admit a spin  structure. The authors are trying to find such an interpretation that generalizes the one for real Bott manifolds.

\begin{eg}\hfill
 \begin{enumerate}
  \item Let $P=\Delta^2\times\Delta^1$. The possible $2$-step generalized Bott manifolds over $P$ correspond to the  characteristic matrices given by $$A=\left(\begin{array}{cc|c}
                                                a_{1,1}^1 & a_{1,2}^1 & a_{1,1}^2\\
                                                a_{2,1}^1 & a_{2,2}^1 & a_{2,1}^2
                                             \end{array}\right)=\left(\begin{array}{cc|c}
                                                                        1 & 1 & a_{1,1}^2\\
                                                                        0 & 0 & 1
                                                                       \end{array}\right)$$
   where $a_{1,1}^2=0$ or $1$. From Theorem \ref{orientable} $B_2=M(A)$ is orientable if and only if $a_{1,1}^1 + a_{1,2}^1 + a_{1,1}^2=1+1+a_{1,1}^2\equiv1\bmod2$ and $a_{2,1}^1 + a_{2,2}^1 + a_{2,1}^2=0+0+1\equiv1\bmod2$. Thus $B_2$ is orientable if and only if $a_{1,1}^2=1$.\\
   Further, $T_1=a_{1,1}^1\,a_{1,2}^1+a_{1,1}^1\,a_{1,1}^2+a_{1,2}^1\,a_{1,1}^2=1+1+1\equiv1\bmod2$ and\\ $T_{12}+a_{1,1}^2\cdot(1+T_2)=1+1(1+0)\equiv0\bmod2$. So by Theorem \ref{spin} $B_2$ is also spin if and only if $a_{1,1}^2=1$.
   
   \item Let $P=\Delta^2\times\Delta^2$. The possible $2$-step generalized Bott manifolds over $P$ correspond to the  characteristic matrices given by $$A=\left(\begin{array}{cc|cc}
                                                a_{1,1}^1 & a_{1,2}^1 & a_{1,1}^2 & a_{1,2}^2\\
                                                a_{2,1}^1 & a_{2,2}^1 & a_{2,1}^2 & a_{2,2}^2
                                             \end{array}\right)=\left(\begin{array}{cc|cc}
                                                                        1 & 1 & a_{1,1}^2 & a_{1,2}^2\\
                                                                        0 & 0 & 1 & 1
                                                                       \end{array}\right)$$
    where $a_{1,1}^2\,,\,a_{1,2}^2\in\{0,1\}$. $B_2=M(A)$ is orientable if and only if $a_{1,1}^2=1$ and $a_{1,2}^2=0$ or $a_{1,1}^2=0$ and $a_{1,2}^2=1$. In both these cases $T_{12}=a_{1,1}^2+a_{1,2}^2\equiv1\bmod2$ so that $B_2$ is not spin.
    
    \item Let $P=\Delta^2\times\Delta^1\times\Delta^1$. The possible $3$-step generalized Bott manifolds over $P$ correspond to the  characteristic matrices given by $$A=\left(\begin{array}{cc|c|c}
                                                a_{1,1}^1 & a_{1,2}^1 & a_{1,1}^2 & a_{1,1}^3\\
                                                a_{2,1}^1 & a_{2,2}^1 & a_{2,1}^2 & a_{2,1}^3\\
                                                a_{3,1}^1 & a_{3,2}^1 & a_{3,1}^2 & a_{3,1}^3
                                             \end{array}\right)=\left(\begin{array}{cc|c|c}
                                                                        1 & 1 & a_{1,1}^2 & a_{1,1}^3\\
                                                                        0 & 0 & 1 & a_{2,1}^3 \\
                                                                        0 & 0 & 0 & 1
                                                                        \end{array}\right)$$
    where $a_{1,1}^2\,,\,a_{1,1}^3\,,\,a_{2,1}^3\in\{0,1\}$. Then $B_3=M(A)$ is orientable if and only if $(a_{1,1}^2\,,\,a_{1,1}^3\,,\,a_{2,1}^3)=(1,0,0)$ or $(a_{1,1}^2\,,\,a_{1,1}^3\,,\,a_{2,1}^3)=(0,1,0)$. In both cases we see that $T_1=1+a_{1,1}^2+a_{1,1}^3+a_{1,1}^2\,a_{1,1}^3\equiv0\bmod2$ so that $B_3$ is not spin.
 \end{enumerate}
 
\end{eg}

\begin{acknow}
 The first author wishes to thank University Grants Commission (UGC), India for financial assistance.
\end{acknow}

\bibliographystyle{siam}
\bibliography{refs_raisa}

\end{document}